\def\dfrac{\displaystyle\frac}
\numberwithin{equation}{section}
\newtheorem{theorem}{Theorem}[section]
\newtheorem{lemma}{Lemma}[section]
\newtheorem{remark}{Remark}[section]
\def\bbr{{\mathbb R}}
\def\bbn{{\mathbb{N}}}
\newcommand{\coma}{\; , \;}
\newcommand{\dx}[1]{\, \mathrm{d} #1}
\newcommand{\mb}[1]{\mathbf{#1}}
\begin{document}

\title{Optimal estimate of field concentration between multiscale nearly-touching inclusions for 3-D Helmholtz system}

\author{ Youjun Deng\footnote{School of Mathematics and Statistics, HNP-LAMA, Central South University, Changsha, Hunan, China. Email:youjundeng@csu.edu.cn}, Yueguang Hu\footnote{Department of Mathematics, City University of Hong Kong, Hong Kong, China. Email: yueghu2-c@my.cityu.edu.hk}, Hongyu Liu\footnote{Department of Mathematics, City University of Hong Kong, Hong Kong, China. Email: hongyliu@cityu.edu.hk} and Wanjing Tang\footnote{School of Mathematics and Statistics, Central South University, Changsha, Hunan, China. Email: wanjingtang@csu.edu.cn}}

\maketitle

\begin{abstract}
We are concerned with the field concentration between two nearly-touching inclusions with high-contrast material parameters, which is a central topic in the theory of composite materials. The degree of concentration is characterised by the blowup rate of the gradient of the underlying field. In this paper, we derive optimal gradient estimates for the wave filed of the 3-D Helmholtz system in the quasi-static regime. There are two salient features of our results that are new to the literature. First, we cover all the possible scenarios that the size of the inclusions are in different scales in terms of the asymptotic distance parameter $\epsilon$, which can be used to characterise the curvature effects of the shape of the inclusions on the field concentration. Second, our estimates can not only recover the known results in the literature for the static case, but can also reveal the interesting frequency effect on the field concentration. In fact, a novel phenomena is shown that even if the static part vanishes, field blowup can still occur due to the (low) frequency effect.

\vspace{0.3cm}
\end{abstract}

\vspace{0.3cm}
\noindent {\bf\em Keywords:} gradient estimate; high-contrast; composite materials; quasi-static regime; multi-scale inclusions; field concentration; acoustic wave.


\section{INTRODUCTION}

We are concerned with the field concentration between two nearly-touching inclusions with high-contrast material parameters, which is a central topic in the theory of composite materials. In the physical setup, the high-contrast inclusions signify the building blocks of the composite material, and the significant field concentration may cause failure of the composite structure or other practically important consequences. The degree of concentration is characterised by the blowup rate of the gradient of the underlying field. Many sharp gradient estimates have been established in the literature, mainly for the static case in different physical setups. Here, we mention a few results for the Laplace equation in different setups arising in understanding composite materials in conductivity and anti-plane elasticity, which are related to our study in the current article. It was proved that the gradient field is uniformly bounded  if the material parameter is bounded both below and above \cite{bonnetier2000elliptic,li2000gradient}.
When the material parameter takes extreme values,  the gradient field generically becomes unbounded as the asymptotic parameter $\epsilon$ approaches $0^+$. Here, $\epsilon\in\mathbb{R}_+$ signifies the distance between the underlying two close-to-touch high-constrast inclusions.
For perfect insulators, the optimal blowup rate of gradient field is of order $\epsilon^{-1/2}$ in dimension two \cite{ammari2007optimal,ammari2005gradient}, and the blowup rate decreases to $\epsilon^{-1/2+\beta}$ for some generic constant $\beta\in\mathbb{R}_+$ in dimensions more than two \cite{li2022gradient}. When the inclusions are balls, the constant $\beta$ was explicitly given in \cite{weinkove2022insulated,dong2021optimal} and converges to $1/2$ as $n \rightarrow \infty$.
For perfect conductors, it was shown that the blowup rate of optimal gradient estimate is of order $\epsilon^{-1/2}$ in dimension two \cite{ yun2009optimal, yun2007estimates},  order $|\epsilon \ln \epsilon|^{-1}$ in dimension three and order $\epsilon^{-1}$ in dimension greater than three \cite{bao2009gradient,bao2010gradient}. It is emphasised that in all of the aforementioned literature, the maximum/comparison principle generally plays a crucial role, from both mathematical and physical perspectives. There are very few works concerning the gradient estimates for wave fields in the frequency regime, where the maximum principle fails. The corresponding study is motivated by the theory of composite materials in photonics and phononics, where one utilises high-contrast building blocks to unlock the potential of materials that manipulate waves; see e.g. \cite{ammari2017mathematical,LLZ} and the references cited therein for more background introduction. In general, the size of the aforementioned building blocks is smaller than the operating wavelength in order to host at least one waveform, and this is known as the sub-wavelength scale or the quasi-static regime. Hence, it is of practical importance to study the wave field concentration between close-to-touching high-contrast material inclusions in the quasi-static regime, which is the focus of the current article.

In fact, Deng, Fang and Liu \cite{deng2022gradient} recently established sharp gradient estimates for the wave field between two nearly-touching high-contrast radial inclusions associated with the 2-D Helmholtz system that arise in transverse electromagnetic scattering. We extend the relevant study to the 3-D Helmholtz system, but associated with the acoustic wave scattering. Though we shall mainly work within the quasi-static regime, namely the size of the material inclusions is smaller than operating wavelength, the size of the inclusions may possess a multi-scale feature with respect to the asymptotic distance parameter $\epsilon$. In fact, in our estimates we cover all the possible scenarios that the size of the inclusions are in different scales in terms of $\epsilon$, which can be used to characterise the curvature effects of the shape of the inclusions on the field concentration; see also \cite{deng2022gradient} for more related discussion about this aspect. Indeed, it is even allowed in \cite{deng2022gradient} that the size of the two inclusions is of different scales. In this paper, we assume that the two radial inclusions of the same size, which allows us to derive much finer gradient estimates. The general gradient estimate consists of two parts: the first part
corresponds to the static field in different scales and in particular the regular-size result recovers the classical result in the literature; and the second part accounts for the frequency effect. It is highly interesting to note that the frequency part may still blow up even if the static part vanishes. This novel phenomenon may have interesting implications to the composite materials. It is known that the sub-wavelength high-contrast inclusions may indue the so-called Minnaert resonance (cf. \cite{ammari2017mathematical,Min,LLZ}). However, we do not consider the resonance effect in our study and hence the blowup of the frequency part is a novel wave phenomenon. Finally, we would like to remark that we shall consider the gradient estimates between resonant sub-wavelength inclusions in a forthcoming paper.

The rest of the paper is organized as follows. In Section 2, we introduce the mathematical setup of our study and discuss the main findings in this paper. In Section 3, we present the mathematical construction of a crucial singular function as well as some auxiliary results that are needed for the gradient estimate. In Section 4, we prove the main theorem and derive the optimal gradient estimate.

\section{MATHEMATICAL SETUP AND MAIN RESULTS}
In this section, we present the mathematical setup of our study and then discuss the main findings in this paper.

Let $\mathbf{c_0} , \mathbf{d_0}$ and $r_c,r_d$ be respectively the centers and radii of two balls $B_j\subset\mathbb{R}^3, j=1,2$, which signify the two material inclusions in our subsequent study. Define $2\epsilon := \mathrm{dist} (B_1,B_2)$. It is assumed that $\epsilon \ll 1$ and $r:=r_c=r_d$.
By rigid motions if necessary, we can assume without loss of generality that
\begin{equation}\label{eq:c0}
\mathbf{c_0} = (r+\epsilon,0,0) \coma \mathbf{d_0} = (-r-\epsilon,0,0) ,
\end{equation}
where $r = r^*\epsilon^\alpha , \alpha \in \bbr$ and $r^*\in\mathbb{R}_+$. The radii are related to the asymptotic distance parameter $\epsilon$ and thus represent different scales of the size of the inclusions. Specially, $\alpha=0$ specifies the regular-size scale of the material inclusions. On the other hand, from a geometric point of view, $\alpha\neq 0$ can be used to characterise the curvature effects of the shape of the inclusion on the gradient estimate; see Fig.~\ref{fig} and Remark~\ref{rem:2.4} in what follows for more discussion. In fact, $\alpha<0$ can characterise the low-curvature scenario whereas $\alpha>0$ can characterise the high-curvature scenario; see also \cite{deng2022gradient} for related discussion about this point.

Next, we consider the scattering of acoustic waves in a uniformly homogeneous medium with two high-contrast balls $B_1 , B_2$ embedded inside.
We denote by $\rho_1 , \kappa_1$ the density and the bulk modulus inside $B_1 \cup B_2$ respectively and $\rho , \kappa$ are the corresponding material parameters for the background medium. We assume in this paper that the contrast between the densities is high whereas between the bulk moduli is not significant. 
The high density contrast between the inclusion and the background medium is one of the key factors to generate the blowup of the gradient field, which is also physically relevant as discussed earlier.
By normalization, we can assume that $\rho = \kappa = 1$ and $\rho_1 \ll 1 , \kappa_1 = \mathcal{O}(1)$.
Then the scattering system is described as the following 3-D Helmholtz system \cite{deng2022gradient}:
\begin{equation}\label{1-1}
\begin{cases}
&\Delta u + \omega^2 u = 0
 \hspace{80pt} \textit{in} \quad \bbr ^3 \backslash \overline{B_1 \cup B_2},\\
&\nabla \cdot \left( \frac{1}{\rho_1}\nabla u \right) + \frac{\omega^2}{\kappa_1} u = 0
\hspace{40pt}\textit{in} \quad B_1\cup B_2, \\
&u \left|_+ = u \right|_-   \;, \; \frac{\partial u}{\partial v}\left|_+ = \frac{1}{\rho_1}\frac{\partial u}{\partial v}\right|_- \hspace{10pt} \textit{on} \quad \partial B_1 \cup \partial B_2 ,\\
&u-u^i \;  \textit{satisfies the Sommerfeld radiation condition}.
\end{cases}
\end{equation}
where $\omega$ signifies the angular frequency of the wave propagation. $u^i$ and $u$ denote the incident and total wave field respectively. $u^i$ is the entire solution of the Helmholtz equation $\Delta u^i + \omega^2 u^i =0$ in $\mathbb{R}^3$. There is a special case that $u^i = e^{\mathrm{i} \omega \mathbf{x}\cdot \theta }$ where $\theta\in\mathbb{S}^2$ signifies the unit impinging direction and $\mathrm{i} :=\sqrt{-1}$. The Sommerfeld radiation condition specifies that
\begin{equation}
\lim_{|\mathbf{x}| \rightarrow \infty} |\mb{x}|\left(\frac{\partial u^s (\mb{x})}{\partial |\mb{x}|}-\mathrm{i}\omega u^s (\mb{x}) \right) = 0,
\end{equation}
where $u^s (\mb{x}) = u(\mb{x})-u^i(\mb{x})$ is the scattered wave and the limit is assumed to hold uniformly in all directions $\mb{x}/|\mb{x}|\in\mathbb{S}^2$.

Throughout the rest of the paper, we assume that
\begin{equation}\label{eq:asum1}
\kappa_1 = \mathcal{O}(1) ,\ \rho_1 \ll 1\ \ \mbox{and}\ \ \omega = \mathcal{O}(\rho_1).
\end{equation}

\begin{theorem}\label{t1}
suppose $\omega \cdot r \ll 1$, and $u^i = u^i_0 +\sum_{j=1}^\infty \omega^ju_j^i \coma j =1,2.\cdots$, where $u_j^i$ is independent of $\omega$. Then for any bounded domain $\Omega$ containing $B_1$ and $B_2$ and when $\alpha <1$, the gradient  of the solution $\nabla u$ to (\ref{1-1}) has the following estimate:
\begin{equation}\label{thm}
\begin{aligned}
\|\nabla u\|_{L^\infty(\Omega\backslash \overline{B_1\cup B_2})} \sim \left.\frac{r^*}{\epsilon^{1-\alpha}\left((1-\alpha)|\ln \epsilon|+\mathcal{O}(1) \right)} \right(& \partial_{\mb{x}_1}u^i(\mb{x}^*) \cdot M + \frac{A \omega^2}{4\pi r^2}  \int_{B_1\cup B_2}u^i(\mb{x})\\
&\left. + \Big(\frac{1}{4\pi r^2} + \frac{Q}{r}\Big) \mathcal{O}(\omega^2)\right) + \mathcal{O}(1).
\end{aligned}
\end{equation}
where $\mb{x}^*$ is some point lying at the line between the centers of $B_1$ and $B_2$. $M$ is defined in  (\ref{M}) in what follows and is a certain finite constant independent of $\alpha , \epsilon ,\omega $ and $u^i$. $A$ is defined  in  (\ref{N})  and has the same order with $Q$ defined in (\ref{34}). The estimate of $\nabla u$ is uniformly bounded for $\alpha \geq 1$.
\end{theorem}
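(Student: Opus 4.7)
The plan is to combine the layer-potential representation of the scattering solution with a careful asymptotic expansion in the two small parameters $\omega$ and $\rho_1$, and then reduce the gradient blowup to the behaviour of the explicit singular function constructed in Section 3. I would represent $u$ inside and outside $B_1\cup B_2$ via single-layer potentials on $\partial B_1\cup\partial B_2$ built from the Helmholtz Green function, and use the transmission conditions of \eqref{1-1} to convert \eqref{1-1} into a system of boundary integral equations for the densities $\phi_1,\phi_2$. Under \eqref{eq:asum1}, the normal-flux jump carries a large factor $1/\rho_1$, which forces the leading-order interior flux on $\partial B_j$ to vanish; physically this is the insulating limit, and it is the geometric mechanism that concentrates $\nabla u$ in the $\epsilon$-thin neck between the two balls.

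First I would freeze $\omega=0$ and solve the resulting static transmission problem. Using the singular harmonic function $q$ introduced in Section 3 (whose gradient saturates to $1/[\epsilon^{1-\alpha}((1-\alpha)|\ln\epsilon|+\O(1))]$ in the neck), I would decompose
\begin{equation*}
u_{\mathrm{static}} = c_0\, q + u_{\mathrm{reg}},
\end{equation*}
where $u_{\mathrm{reg}}$ has uniformly bounded gradient and the coefficient $c_0$ is pinned down by the solvability/orthogonality condition of the reduced boundary integral equation. A Taylor expansion of $u^i_0$ along the segment joining $\mb{c_0}$ and $\mb{d_0}$ evaluates $c_0$ at an intermediate point $\mb{x}^*$ and produces the $\partial_{\mb{x}_1}u^i(\mb{x}^*)\cdot M$ contribution with $M$ as in \eqref{M}.

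Next I would expand $u=u_0+\omega u_1+\omega^2 u_2+\cdots$ matching the hypothesis $u^i=u^i_0+\sum_{j\ge 1}\omega^j u^i_j$. The $\omega u_1$ piece satisfies the same static transmission problem as $u_0$ with source $u^i_1$, so after extracting the factor $\omega$ it contributes only to $u_{\mathrm{reg}}$. The genuinely new feature appears at $\O(\omega^2)$: the interior volume source $\omega^2 u/\kappa_1$ generates, through integration against the auxiliary potential from Section 3, an effective monopole charge on each inclusion proportional to $\int_{B_1\cup B_2}u^i\,\dx{\mb{x}}$. Multiplying this charge by $\|\nabla q\|_{L^\infty}$ and the surface-measure factor $1/(4\pi r^2)$ yields the $\frac{A\omega^2}{4\pi r^2}\int_{B_1\cup B_2}u^i$ term, while higher-order interactions between the two balls and between volume source and boundary flux give the remainder $\bigl(\frac{1}{4\pi r^2}+\frac{Q}{r}\bigr)\O(\omega^2)$. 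Combining the static and frequency contributions and absorbing smooth pieces into the trailing $\O(1)$ delivers \eqref{thm}. For $\alpha\ge 1$ one has $r\lesssim\epsilon$, so the neck region degenerates and $\|\nabla q\|_{L^\infty}$ itself is bounded, giving the uniform bound directly.

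The main obstacle, in my view, is the simultaneous bookkeeping of the two small parameters $\epsilon$ and $\omega$ when the radius itself depends on $\epsilon$ through $r=r^*\epsilon^\alpha$. The usual $\omega$-expansion of the single-layer operator is uniform only in $\omega r$, yet the $1/r^2=\epsilon^{-2\alpha}$ weight multiplying the frequency term in \eqref{thm} shows that subleading corrections of the layer operators cannot be dropped naively. I would therefore expand $\mathcal{S}_{B_j}^\omega=\mathcal{S}_{B_j}^{0}+\omega\mathcal{S}_{B_j}^{(1)}+\omega^2\mathcal{S}_{B_j}^{(2)}+\cdots$ with explicit $\epsilon$-dependent remainder estimates, and track precisely which frequency corrections survive when paired with the singular function $\nabla q$. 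This is also what makes possible the novel phenomenon highlighted in the abstract: the frequency term may dominate the blowup even when $\partial_{\mb{x}_1}u^i(\mb{x}^*)\cdot M$ happens to vanish, because the volume integral $\int_{B_1\cup B_2}u^i$ is generically nonzero and is amplified by the factor $\omega^2/(4\pi r^2)\cdot\|\nabla q\|_{L^\infty}$.
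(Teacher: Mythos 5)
Your overall skeleton coincides with the paper's: reduce \eqref{1-1} via layer potentials and the assumption $\rho_1\ll1$ to the exterior problem \eqref{Q-1} with constant Dirichlet data $\lambda_j+\O(\omega^2)$ on each sphere and net flux $\O(\omega^2)$ (Lemma \ref{lemma1}); split $u=a\,h_\omega+b$ with $h_\omega$ built from the image-charge singular function \eqref{h0} and $a=(\lambda_1-\lambda_2)/(C_1-C_2)$; show $\nabla b$ is uniformly bounded (Lemma \ref{lemma2}); and read off the rate from $|a|\,\|\nabla h_\omega\|$ together with $Q=\tfrac{1-\alpha}{2}|\ln\epsilon|+\O(1)$. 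Two caveats. First, the limit you reach is not the insulated one: since $\partial_\nu u|_-=\rho_1\,\partial_\nu u|_+\to0$, the interior field becomes constant, and the exterior limit problem is of perfectly-conducting (floating-potential) type --- which is exactly why the regular-scale rate $(\epsilon|\ln\epsilon|)^{-1}$ matches \cite{lim2009blow} rather than the insulated rate $\epsilon^{-1/2+\beta}$; your described mechanism is otherwise correct. Second, you assert rather than derive the logarithm: it comes from the sum $Q=\sum_n q_n$ of the image-charge weights, with $q_n\sim 1/(n+1)$ up to $n\approx N=[\epsilon^{(\alpha-1)/2}]$, and proving this for $r=r^*\epsilon^\alpha$ (Lemmas \ref{le1}--\ref{le2}) is the genuinely new computation relative to \cite{kang2014characterization}.

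The one step of your plan that would not deliver the theorem as stated is the frequency term. You propose to obtain $\frac{A\omega^2}{4\pi r^2}\int_{B_1\cup B_2}u^i$ from the interior volume source $\omega^2 u/\kappa_1$. But inside $B_j$ the field is constant to leading order, so that source produces a net exterior flux $-\frac{\omega^2}{\kappa_1}\int_{B_j}u\approx-\frac{\omega^2}{\kappa_1}\lambda_j|B_j|$, i.e.\ a monopole charge proportional to $\lambda_j$ rather than to $\int_{B_1\cup B_2}u^i$; in the paper this effect is merely absorbed into the $\O(\omega^2)$ remainder. The displayed term instead arises from evaluating $\lambda_1-\lambda_2=\int_{\partial B_1\cup\partial B_2}\partial_\nu h_0\,u+\O(\omega^2)$ and applying Green's identity to $h_0$ and $u^i$ over the \emph{interior} region $B_1\cup B_2$: the distributional Laplacian of $h_0$ gives the point evaluations $\frac{1}{Q}\sum_n q_n\big(u^i(\mb{c_n})-u^i(-\mb{c_n})\big)$, whence $M\,\partial_{\mb{x}_1}u^i(\mb{x}^*)$ by the mean value theorem, while $\Delta u^i=-\omega^2u^i$ gives $\omega^2\int_{B_1\cup B_2}h_0\,u^i$; see \eqref{lamda} and \eqref{N}. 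The weight $h_0$ in that integral is essential: it is what produces $A=\sum_n q_na_n\sim Q$ and the precise vanishing criterion $\int_{B_1\cup B_2}h_0\,u^i=0$ used in the remark on frequency-induced blowup. Without routing the $\omega^2$ contribution through this Green identity you would obtain neither the correct coefficient nor the correct condition for when the frequency part survives after the static part vanishes.
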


Some remarks on Theorem \ref{t1} are in order.

\begin{remark}
It is noted that the condition $\omega\cdot r\ll 1$ indicates that the size of the inclusion, namely $r$, is smaller than the operating wavelength $2\pi/\omega$. That is, we are working within the quasi-static regime as discussed earlier. We observe that if $\alpha =0$, one has
\begin{equation}\label{eq:me1}
\begin{aligned}
\|\nabla u\|_{L^\infty(\Omega\backslash \overline{B_1\cup B_2})} \sim \frac{r^*}{\epsilon |\ln \epsilon|} (1+o(1))&\left( \partial_{\mb{x}_1}u^i(\mb{x}^*) \cdot M + \frac{A \omega^2}{4\pi r^2}  \int_{B_1\cup B_2}u^i(\mb{x}) \right.\\
&\left. + \Big(\frac{1}{4\pi r^2} + \frac{Q}{r}\Big) \mathcal{O}(\omega^2)\right) + \mathcal{O}(1).
\end{aligned}
\end{equation}
which recovers the blowup rate $(\epsilon |\ln \epsilon|)^{-1}$ of the gradient estimate for the static case \cite{lim2009blow}.
\end{remark}

\begin{remark}
It is pointed out that in \eqref{eq:me1} the static part may vanish, namely $ \partial_{\mb{x}_1}u^i(\mb{x}^*) = 0$, provided the incident field fulfils that $ \sum_{n=0}^\infty q_n (u^i(\mb{c_n}) - u^i(-\mb{c_n})) = 0$, where $\mathbf{c}_n$ and $q_n$ are respectively defined in \eqref{cn}--\eqref{eq:cn} and \eqref{34} in what follows. This is in sharp contrast to the two-dimensional result in \cite{deng2022gradient} where the vanishing of the static part is only related to the incident field at the origin. In fact, we would like to point out that similar phenomenon was observed in \cite{kang2014characterization} for the gradient estimate associated with the Laplace equation. Furthermore, we note that in case $0 < \alpha <1$ and $\partial_{\mb{x}_1}u^i(\mb{x}^*) \neq 0$, which corresponds to the high-curvature scenario, the blowup rate is $((1-\alpha)\epsilon^{1-\alpha}|\ln \epsilon|)^{-1}$, which is smaller than the regular-scale blowup rate $(\epsilon |\ln \epsilon|)^{-1}$ in the static case. It can be also seen that the gradient estimate still holds in the low-curvature case (i.e. $\alpha<0$). The blowup rate is bigger than that in static case. Finally, there is no blowup in the case $\alpha \geq 1$.
\end{remark}

\begin{remark}
We emphasize that the gradient field can still blow up due to the frequency effect even if $\partial_{\mb{x}_1}u^i(\mb{x}^*) = 0$.
In fact, in the case $A\int_{B_1\cup B_2} u^i \neq 0$, which is equivalent to $\int_{B_1\cup B_2} h_0 u^i \neq 0$ (see (\ref{N})) where $h_0$ is defined in (\ref{h0}) and  $0< \alpha <1$ , the blowup occurs if the frequency satisfies
$$C_a \, \epsilon^{\frac{1+\alpha}{2}} < \omega \ll 1,$$
where $C_a = 2 \sqrt{\pi r^* Q/A}$ is a bounded constant.
Furthermore, even if $A\int_{B_1\cup B_2}u^i = 0$, the gradient field may blow up for $0<\alpha<1$ if the frequency satisfies
$$C_b \, \epsilon^{\frac{1+\alpha}{2}}Q^{\frac{1}{2}} < \omega \ll 1 ,$$
where $C_b = 2\sqrt{\pi r^*}$ or in the case $\alpha = 0$ and $\epsilon^{1/2} < \omega \ll 1$.
\end{remark}

\begin{remark}\label{rem:2.4}
It is interesting to note that the frequency effect decreases rapidly as the size of the inclusions increases especially for the low-curvature case, namely $\alpha <0$ (see Figure \ref{fig}) . This means that the field concentration mainly depends on the static effect for large-scale inclusions and the low curvature would weaken the influence of the frequency part.
\end{remark}
\begin{figure}[!htbp]
\begin{center}
\begin{tikzpicture}
\draw[black] (2,2) arc(91.5:88.5:160);
\draw[black] (2,2.5) arc(268.5:271.5:160);
\draw[line width = 1.3pt] (6.1,2.05) -- (6.1,2.45);
\node at (6.3,2.25) {$ \scriptscriptstyle 2\epsilon$};
\node at (5,1.5) {$B_2$};
\node at (5,2.9) {$B_1$};
\end{tikzpicture}
\end{center}
\caption{Schematic illustration of the low-curvature case.}\label{fig}
\end{figure}
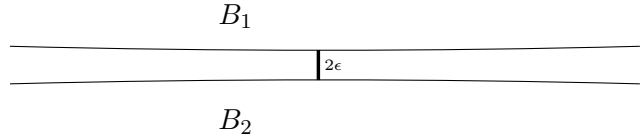

\section{MATHEMATICAL CONSTRUCTION}
In order to prove the main result in Theorem \ref{t1}, we next introduce an auxiliary system and present several auxiliary lemmas, whose proofs are deferred to the subsequent subsections. We first introduce the following Dirichlet system:
\begin{equation}\label{Q-1}
\begin{cases}
\Delta u + \omega^2 u = 0  \quad in \quad \bbr^3 \backslash \overline{B_1 \cup B_2}, \\
u = \lambda_1 + \mathcal{O}(\omega^2) \quad on \quad \partial B_1, \\
u = \lambda_2 + \mathcal{O}(\omega^2) \quad on \quad \partial B_2 ,\\
u - u^i \;  \textit{satisfies the Sommerfeld radiation condition} ,
\end{cases}
\end{equation}
where the constants $\lambda_j$ are determined by
\begin{equation*}
\int_{\partial B_j} \partial_\nu u|_+ = \mathcal{O}(\omega^2) \coma j=1,2
\end{equation*}
and they are unique up to $\mathcal{O}(\omega^2)$.

We have the following result:
\begin{lemma}\label{lemma1}
Let $v$ be the solution of the system (\ref{1-1}) in $\bbr^3\backslash \overline{B_1 \cup B_2}$. Suppose $u$ is solution to (\ref{Q-1}). Then there holds
\begin{equation*}
v=u+C\omega+\mathcal{O}(\omega^2),
\end{equation*}
where $C$ is a constant which does not depend on $\epsilon$ and $\omega$.
\end{lemma}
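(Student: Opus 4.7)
The idea is to exploit the quasi-static assumption $\omega r\ll 1$ and the high-contrast scaling $\omega = \mathcal{O}(\rho_1)$, $\rho_1\ll 1$, to show that the field $v$ is essentially constant on each boundary $\partial B_j$, so that the transmission problem \eqref{1-1} reduces, modulo a single globally constant $\mathcal{O}(\omega)$ shift, to the Dirichlet system \eqref{Q-1}. The first step is to verify that the flux-normalisation used to fix $\lambda_j$ in \eqref{Q-1} is already satisfied by $v$: applying the divergence theorem to the interior equation and invoking the transmission condition $\partial_\nu v|_-=\rho_1\,\partial_\nu v|_+$ yields
\begin{equation*}
\int_{\partial B_j}\partial_\nu v\Big|_+ \;=\; \rho_1^{-1}\!\int_{B_j}\Delta v \;=\; -\frac{\omega^2}{\kappa_1}\!\int_{B_j}v \;=\; \mathcal{O}(\omega^2),
\end{equation*}
matching the flux constraint in \eqref{Q-1}.

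Next I would perform the interior analysis. Rewriting the inclusion equation as $\Delta v = -\rho_1\omega^2\kappa_1^{-1}v = \mathcal{O}(\omega^3)$ in $B_j$ and observing that $\partial_\nu v|_- = \rho_1\partial_\nu v|_+$ has a factor of $\rho_1 = \mathcal{O}(\omega)$ relative to the (possibly large) exterior flux, a Poincar\'e-type estimate on the ball (via the Neumann-to-Dirichlet map on the sphere, coupled with the small $L^2$ size of the mean-zero part of the boundary data and the small interior source) gives
\begin{equation*}
v\big|_{\partial B_j} \;=\; \bar v_j + \mathcal{O}(\omega^2),\qquad \bar v_j := \frac{1}{|\partial B_j|}\int_{\partial B_j}v,
\end{equation*}
so that the comparison of $v$ with $u$ on the boundaries reduces to comparing the two scalars $\bar v_j-\lambda_j$. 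Expanding the outgoing fundamental solution
\begin{equation*}
G_\omega(\mb{x}) \;=\; -\frac{e^{\mathrm{i}\omega|\mb{x}|}}{4\pi|\mb{x}|} \;=\; G_0(\mb{x}) - \frac{\mathrm{i}\omega}{4\pi} + \mathcal{O}(\omega^2)
\end{equation*}
and representing both $v-u^i$ and $u-u^i$ as layer potentials on $\partial B_1\cup\partial B_2$, the static limit ($\omega=0$, $\rho_1\to 0$) collapses the transmission problem for $v$ onto the perfect-conductor Dirichlet problem that underlies the $\omega=0$ case of \eqref{Q-1}. Hence $v_0 = u_0$ and $\bar v_j - \lambda_j = \mathcal{O}(\omega)$; the $\mathcal{O}(\omega)$ correction arises exclusively from the spatially uniform term $-\mathrm{i}\omega/(4\pi)$ in $G_\omega$ paired with the leading-order layer densities, producing the same scalar $C\omega$ on both $\partial B_1$ and $\partial B_2$.

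Finally I would set $w:=v-u-C\omega$. Then $w$ satisfies $(\Delta+\omega^2)w=0$ in $\bbr^3\setminus\overline{B_1\cup B_2}$, obeys the Sommerfeld radiation condition, and carries Dirichlet data of size $\mathcal{O}(\omega^2)$ on $\partial B_1\cup\partial B_2$; uniqueness and stability for the exterior Dirichlet Helmholtz problem at subwavelength frequency then yield $w = \mathcal{O}(\omega^2)$ uniformly on bounded exterior sets. The main obstacle is the identification of $C$ as genuinely $\epsilon$-independent: one must verify that the leading-order layer densities $\phi_j^{(0)}$ in the perfect-conductor representation have total boundary integrals $\int_{\partial B_j}\phi_j^{(0)}$ whose $\omega$-linear contribution is $\epsilon$-independent, so that $C$ can be read off as a universal constant rather than an $\epsilon$-dependent quantity hidden in the Green's-function expansion. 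This uniformity relies on the well-posedness of the static perfect-conductor problem together with the separate zero-flux normalisation on each sphere, and is where the quasi-static reduction must be applied with care.
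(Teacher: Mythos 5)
Your overall strategy matches the paper's: verify the flux normalisation of (\ref{Q-1}) by the divergence theorem, show that the total field is constant on each $\partial B_j$ up to a small error, and identify the common $C\omega$ shift with the zeroth-order term $-\mathrm{i}\omega/(4\pi)$ in the expansion of the outgoing fundamental solution. The difference is one of implementation: the paper works entirely at the level of the boundary integral system $A(\omega,\rho_1)[\varphi]=U$, expanding $S^\omega_{B_c}$ and $(K^\omega_{B_c})^*$ in powers of $\omega$ and reading off the boundary values of $u$ from the resulting equations for $\varphi_1,\varphi_2$, whereas you argue at the PDE level (divergence theorem, interior Neumann problem, exterior uniqueness). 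Your flux computation and your identification of the source of $C\omega$ are correct and essentially identical to the paper's.

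The gap is in your interior step, namely the claim $v|_{\partial B_j}=\bar v_j+\mathcal{O}(\omega^2)$. The interior Neumann data is $\partial_\nu v|_-=\rho_1\,\partial_\nu v|_+$, and the standing assumption (\ref{eq:asum1}) gives $\omega=\mathcal{O}(\rho_1)$, i.e.\ $\rho_1\gtrsim\omega$ (not $\rho_1=\mathcal{O}(\omega^2)$, nor even $\rho_1=\mathcal{O}(\omega)$ as your estimate $\Delta v=\mathcal{O}(\omega^3)$ tacitly assumes). So even granting $\|\partial_\nu v|_+\|_{L^2(\partial B_j)}=\mathcal{O}(1)$, the oscillation of $v$ on $\partial B_j$ produced by this data is only $\mathcal{O}(\rho_1)$, which is not $\mathcal{O}(\omega^2)$; and it is not a constant shift, since it is $\rho_1$ times the trace of a nonconstant interior solution. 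Worse, $\|\partial_\nu v|_+\|_{L^2(\partial B_j)}$ is not uniformly bounded in $\epsilon$: the exterior normal derivative near the gap is precisely the blowing-up gradient that Theorem \ref{t1} is about, so feeding it into the interior estimate without first controlling it is circular. The paper's proof handles this by keeping the corresponding nonconstant boundary contribution explicit as $\rho_1 S^0_{B_c}[\partial_\nu u_0^i]$ (driven by the incident field), observing that its gradient is uniformly bounded in $\epsilon$, and absorbing it into the final $\mathcal{O}(1)$ remainder of the gradient estimate rather than into the $\mathcal{O}(\omega^2)$ boundary error. To repair your argument you would need (i) to express the interior Neumann data through the integral equation so that the large exterior flux does not enter the estimate a priori, and (ii) to weaken the conclusion of the interior step to ``constant $+$ a term whose gradient is uniformly bounded in $\epsilon$ $+\,\mathcal{O}(\omega^2)$,'' which is all that the proof of Theorem \ref{t1} actually requires.
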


Let $R(\mb{x})$ signify the reflection with respect to $\partial B_2$:
\begin{equation*}
R(\mb{x}) = \frac{r^2   (\mb{x}-\mb{d_0})}{|\mb{x}-\mb{d_0}|^2}+ \mb{d_0} \coma \mb{x} \in \bbr^3\backslash \overline{B_2}.
\end{equation*}
Set $\mb{c_0}=(c_0, 0, 0)$ with $c_0=r+\epsilon$ to be the center of $B_1$ (cf. \eqref{eq:c0}). Define $\mb{c}_n=(c_n, 0, 0)$ with $c_n$ defined by the following recursive relation:
\begin{equation}\label{cn}
c_{n+1} = - \frac{r^2}{r+\epsilon+c_n} +r+\epsilon,\quad n=0, 1, 2, \ldots.
\end{equation}
One can verify that
\begin{equation}\label{eq:cn}
\mb{c_{n+1}} =(c_{n+1},0,0) =  -R(\mb{c_n}),\quad n=0,1,2,\ldots,
\end{equation}
and the sequence $\{\mb{c_n}\}_{n\in\mathbb{N}}$ converges to $\mb{c}=(c,0,0)=(\sqrt{\epsilon^2 +2r\epsilon},0,0)$ as $n\rightarrow\infty$. It is noted that both $\mathbf{c}_n$, $n\in\mathbb{N}$, and $\mathbf{c}$ stay on the line connecting $(\epsilon,0,0)$ and $(c_0,0,0)$ inside the ball $B_1$, no matter what value of the radius $r$ takes.

Next we let $h_0$ be a singular function defined by
\begin{equation}\label{h0}
h_0(\mb{x}) = -\frac{1}{4\pi Q}\sum_{n=0}^\infty q_n \left( \frac{1}{|\mb{x}-\mb{c_n}|} - \frac{1}{|\mb{x}+\mb{c_n}|} \right),
\end{equation}
where
\begin{align}
\rho_m &=
\begin{cases}\label{34}
1 \coma  m=0, \\
\frac{r}{|c_0+c_{m-1}|} \coma m \geq 1,
\end{cases}\ \mbox{and}\ \
q_n = \prod_{m=0}^n \rho_m  \coma
Q = \sum_{n=0}^\infty q_n.
\end{align}
It can be verified by straightforward calculations that $h_0$ is the solution of the following system \cite{yun2007estimates,lim2009blow}:
\begin{equation}\label{3-2}
\begin{cases}
\Delta h_0 = 0 \qquad in \quad \bbr^3 \backslash \overline{B_1 \cup B_2}, \\
h = C_j \hspace{28pt} on \quad \partial B_j , \\
\int_{\partial B_j} \partial_{\nu} h_0 |_+ = (-1)^{j+1} \quad j=1,2, \\
h_0 (\mathbf{x}) = \mathcal{O}(|\mb{x}|^{-2})  \quad as \quad |\mathbf{x}| \rightarrow \infty,
\end{cases}
\end{equation}
where
$
C_j := (-1)^j \dfrac{1}{4\pi r Q} \coma j =1,2 .
$

In what follows, we shall decompose the solution $u(\mb{x})$ to the system (\ref{Q-1}) into two parts and define
\begin{equation}\label{b}
b(\mb{x}) := u(\mb{x}) - u_\omega (\mb{x}) = u(\mb{x})  -a h_\omega (\mb{x}),
\end{equation}
where $a = (\lambda_1 -\lambda_2)/(C_1-C_2)$ and $C_j,\lambda_j$ are defined in (\ref{Q-1}) and (\ref{3-2}). We have the following result:
\begin{lemma}\label{lemma2}
Let $b(\mb{x})$ be defined in (\ref{b}). Then for any bounded domain $\Omega \subset \bbr^3$ containing the balls $B_1$ and $B_2$ , it holds that
\begin{equation}
\|\nabla b\|_{L^\infty(\Omega \backslash \overline{B_1 \cup B_2})} \leqslant C  +\mathcal{O}(\omega^2),
\end{equation}
where C is some constant independent of $\epsilon \coma \omega $ and $\alpha$ .
\end{lemma}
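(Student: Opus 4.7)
The strategy is to exploit the calibration of the coefficient $a$ in the decomposition $b = u - a h_\omega$: the constant $a = (\lambda_1-\lambda_2)/(C_1-C_2)$ is chosen precisely so that the singular dipolar contribution which drives the blow-up of $\nabla u$ is exactly cancelled by $a\,\nabla h_\omega$, leaving $b$ with nearly equal Dirichlet data on both spheres. Concretely, combining $u|_{\partial B_j} = \lambda_j + \mathcal{O}(\omega^2)$ from \eqref{Q-1} with the low-frequency expansion $h_\omega|_{\partial B_j} = C_j + \mathcal{O}(\omega^2)$ around the harmonic function $h_0$ of \eqref{3-2}, I would first establish
\begin{equation*}
b|_{\partial B_1} = b|_{\partial B_2} + \mathcal{O}(\omega^2),
\end{equation*}
so that $b$ assumes a common constant value $\mu$ on $\partial B_1 \cup \partial B_2$ up to an $\mathcal{O}(\omega^2)$ error.

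Writing $\tilde b := b - \mu$, since $u$ satisfies the Helmholtz equation in $\bbr^3\setminus\overline{B_1\cup B_2}$ and since $h_\omega$ is constructed to satisfy the same equation, $\tilde b$ solves a non-homogeneous Helmholtz equation with bounded right-hand side $-\omega^2 \mu$, vanishing Dirichlet data modulo $\mathcal{O}(\omega^2)$ on $\partial B_1\cup\partial B_2$, and an appropriate radiation-type behaviour at infinity inherited from the decompositions of $u-u^i$ and $h_\omega$. Splitting $\tilde b = \tilde b_{\mathrm{st}} + \tilde b_{\mathrm{fr}}$, where $\tilde b_{\mathrm{st}}$ solves the static Dirichlet problem with vanishing boundary data on the two close-to-touching spheres and $\tilde b_{\mathrm{fr}}$ absorbs the $\mathcal{O}(\omega^2)$ quasi-static correction, reduces the lemma to a uniform gradient bound on $\tilde b_{\mathrm{st}}$ plus a routine quasi-static perturbation.

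The core estimate $\|\nabla \tilde b_{\mathrm{st}}\|_{L^\infty(\Omega\setminus\overline{B_1\cup B_2})} \leq C$, uniform in $\epsilon$ and $\alpha$, follows the philosophy of \cite{yun2007estimates,lim2009blow}: with identical Dirichlet data on $\partial B_1$ and $\partial B_2$, the two spheres behave electrostatically like a single equipotential conductor, so the narrow neck carries no dipolar gradient and no blow-up can form. I would make this rigorous by expanding $\tilde b_{\mathrm{st}}$ in the image-monopole series attached to the reflection sequence $\{\pm \mathbf{c}_n\}$ from \eqref{cn}; equal boundary values force pairwise cancellation of the leading singular image pair $(\mathbf{c}_0,-\mathbf{c}_0)$, leaving a geometrically convergent residual whose gradient is controlled by global quantities only. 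The quasi-static correction $\tilde b_{\mathrm{fr}}$ is then handled by a standard layer-potential estimate in the regime $\omega r \ll 1$. The main technical obstacle is verifying that this cancellation is \emph{uniform} in the curvature parameter $\alpha$, since $r = r^*\epsilon^\alpha$ can itself degenerate or blow up with $\epsilon$: one must check that the coefficients $\{q_n\}$ and the pole locations $\{\mathbf{c}_n\}$ conspire so that the residual image series remains uniformly $\ell^1$-summable across all admissible values of $\alpha$.
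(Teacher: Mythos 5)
Your first reduction coincides with the paper's: the calibration $a=(\lambda_1-\lambda_2)/(C_1-C_2)$ forces $b$ to equal the single constant $\tilde C=(\lambda_2C_1-\lambda_1C_2)/(C_1-C_2)$ on both spheres up to $\mathcal{O}(\omega^2)$, and the low-frequency expansion of the layer potentials reduces the lemma to a uniform gradient bound for a harmonic remainder $b_0$ with boundary datum $\tilde C-u^i$ and decay at infinity. Up to that point the proposal is sound, modulo one imprecision: a harmonic function with genuinely vanishing boundary data and decay at infinity is identically zero, so your ``static Dirichlet problem with vanishing boundary data'' must in fact carry the non-constant datum $-u^i$ (equivalently, the prescribed behaviour $u^i-\mu$ at infinity); that datum is where all the content of the lemma lives.

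The genuine gap is in the core estimate. You propose to expand the remainder in ``the image-monopole series attached to $\{\pm\mathbf{c}_n\}$'' and invoke ``pairwise cancellation of the leading singular image pair.'' But the monopole series with weights $q_n$ at the poles $\pm\mathbf{c}_n$ is precisely the representation \eqref{h0} of $h_0$, i.e.\ of the solution with \emph{constant} potentials $C_1,C_2$ on the two spheres. The remainder you must bound has the non-constant datum $\tilde C-u^i$; its image representation consists of iterated Kelvin reflections of the harmonic extension of $u^i$, not of point monopoles, so the claimed cancellation of the pair $(\mathbf{c}_0,-\mathbf{c}_0)$ does not directly apply. Whether that series of reflected functions has a gradient bounded in the gap uniformly in $\epsilon$ and in $\alpha$ (with $r=r^*\epsilon^\alpha$ possibly degenerating) is exactly the assertion of the lemma, and you explicitly defer it (``one must check that the coefficients \dots conspire \dots'') rather than prove it. The paper avoids the series altogether: since the components of $\nabla b_0$ are harmonic and $\nabla b_0=\mathcal{O}(|\mathbf{x}|^{-2})$, the quantity $|\nabla b_0|$ attains its maximum on $\partial B_1\cup\partial B_2$, and at the closest points $\pm\zeta_1=(\pm\epsilon,0,0)$ the gradient splits into a tangential part, equal to $-\partial_T u^i$ because the boundary datum is $\tilde C-u^i$, and a normal part controlled by the difference quotient of $-u^i$ across the $2\epsilon$ gap --- bounded because the constants $\tilde C$ cancel and $u^i$ is smooth. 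To complete your route you would need either to carry out the reflected-function series with tail estimates uniform in $\alpha$, or to replace it by this maximum-principle argument.
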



\subsection{Layer potentials}
We shall make use layer-potential techniques to prove the results in the previous subsection. To that end, we introduce some necessary notations and results on the layer potential operators. Let $\Gamma_\omega$ be the fundamental solution of the 3-D PDO $\Delta+\omega^2$:
\begin{equation}
\Gamma_\omega (\mb{x})= -\frac{e^{\mathrm{i}\omega |\mb{x}|}}{4\pi|\mb{x}|},\ \ \mb{x}\neq \mathbf{0}.
\end{equation}
We mention that $\Gamma_0 (\mb{x}) = -\dfrac{1}{4\pi|\mb{x}|}$ if $\omega =0$. Let $B \subset \bbr ^3$ be a bounded $C^2$ domain, the single- and double-layer potential operators are bounded from $L^2(\partial B)$ into $H^1(\bbr^2 /\partial B)$ and given by
\begin{align}
S^\omega_B [\psi] (\mb{x})  &= \int_{\partial B} \Gamma_\omega (\mb{x}-\mb{y}) \psi(\mb{y}) \dx{s_\mb{y}} \coma \mb{x} \in \bbr^3/\partial B, \\
D^\omega_B [\psi] (\mb{x}) &= \int_{\partial B} \frac{\partial \Gamma_\omega (\mb{x}-\mb{y})}{\partial \nu_\mb{y}} \psi(\mb{y}) \dx{s_\mb{y}} \coma \mb{x} \in \bbr^3/\partial B.
\end{align}
Here $\nu_{\mb{x}}\in\mathbb{S}^2$ signifies the exterior unit normal vector to the boundary of the concerned domain $B$ at $\mb{x}$. The Neumann-Poincar\'e operators $K_B^\omega , (K_B^\omega)^*$ are bounded from $L^2(\partial B)$ into $L^2(\partial B)$ and given by
\begin{align}
K_B^\omega [\psi] (\mb{x}) &= \int_{\partial B} \frac{\partial \Gamma_\omega (\mb{x}-\mb{y})}{\partial \nu_\mb{y}} \psi(\mb{y}) \dx{s_\mb{y}} \coma \mb{x} \in \partial B, \\
(K_B^\omega)^* [\psi] (\mb{x})  &= \int_{\partial B} \frac{\partial \Gamma_\omega (\mb{x}-\mb{y})}{\partial \nu_\mb{x}} \psi(\mb{y}) \dx{s_\mb{y}} \coma \mb{x} \in \partial B.
\end{align}
Furthermore, it is known that the single-layer potential is continuous across $\partial B$ and the normal derivative satisfies the trace formula
\begin{equation}
\left. \frac{\partial}{\partial \nu}S^\omega_B [\psi] (\mb{x})\right|_{\pm} = \left(\pm\frac{I}{2} + (K_B^\omega)^*\right) [\psi](\mb{x}) \coma \mb{x}\in \partial B,
\end{equation}
where the subscripts $\pm$ indicate the limits from outside and inside of the domain $B$ respectively. The double-layer potential satisfies the trace formula
\begin{equation}
\left. D^\omega_B [\psi] (\mb{x}) \right|_{\pm} = \left(\mp \frac{I}{2} +K_B^\omega \right) [\psi] (\mb{x}) \coma \mb{x}\in \partial B.
\end{equation}
It is pointed out that the above formulas still  hold when $\omega =0$ and the kernel function shall be replaced by $\Gamma_0(\mb{x})$.

Next we recall some useful asymptotic expansions for the layer potentials \cite{ammari2017mathematical,DLZ1,DLZ2}. The fundamental solution has the following asymptotic expansion:
\begin{equation}
\Gamma_\omega (\mb{x}) = \Gamma_0 (\mb{x}) -\sum_{j=1}^\infty \frac{\mathrm{i}}{4\pi}\frac{(\mathrm{i}|\mb{x}|)^{j-1}}{j!} \omega^j.
\end{equation}
then we have the following asymptotic expansion:
\begin{equation}\label{2-1}
S^\omega_B [\psi] (\mb{x}) = S_B^0 [\psi] (\mb{x}) +\sum_{j=1}^\infty \omega^j S_B^j [\psi] (\mb{x})\coma \mb{x}\in \partial B,
\end{equation}
where
\begin{equation*}
S_B^j [\psi] (\mb{x}) = -\frac{\mathrm{i}^j}{4\pi j!} \int_{\partial B} |\mb{x}-\mb{y}|^{j-1}\psi (\mb{y})\dx{s_\mb{y}} .
\end{equation*}

Similarly, the Neumann-Poincar\'e operator $(K_B^\omega)^*$ has the following asymptotic expansion:
\begin{equation}\label{2-2}
(K_B^\omega)^* [\psi] (\mb{x}) = (K_B^0)^* [\psi] (\mb{x}) + \sum_{j=1}^\infty \omega^j K_B^j [\psi] (\mb{x}) \coma \mb{x}\in \partial B,
\end{equation}
where
\begin{equation*}
K_B^j [\psi] (\mb{x}) = -\frac{\mathrm{\mathrm{i}}^j (j-1)}{4\pi j!}\int_{\partial B} |\mb{x}-\mb{y}|^{j-3}(\mb{x}-\mb{y}) \cdot \nu_{\mb{x}} \psi(\mb{y}) \dx{s_\mb{y}}.
\end{equation*}

By the operator theory, we can conclude the following result about the boundedness on the asymptotic expansion:
\begin{lemma}
The norm $\|S_D^j\|_{\mathcal{L}(L^2(\partial B),H^1(\partial B))}$ is uniformly bounded with respect to $j$. Moreover, the series (\ref{2-1}) is convergent in $\mathcal{L}(L^2(\partial B),H^1(\partial B))$.
\end{lemma}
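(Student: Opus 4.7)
The plan is to exploit the explicit form of the kernel of $S_B^j$, namely $-\frac{\mathrm{i}^j}{4\pi j!}|\mb{x}-\mb{y}|^{j-1}$, and to exploit that since $B$ is bounded, $|\mb{x}-\mb{y}|$ is controlled by the diameter $d$ of $B$ uniformly for $\mb{x},\mb{y}\in\partial B$. The factor $1/j!$ will do the heavy lifting, so the estimates should be very favourable.

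First I would establish the $L^2\to L^2$ bound. The kernel $K_j(\mb{x},\mb{y})$ satisfies $|K_j(\mb{x},\mb{y})|\leq d^{j-1}/(4\pi j!)$. A direct Cauchy--Schwarz estimate in $\mb{y}$, followed by integration in $\mb{x}$ over $\partial B$, yields
\begin{equation*}
\|S_B^j[\psi]\|_{L^2(\partial B)} \leq \frac{d^{j-1}|\partial B|}{4\pi\, j!}\,\|\psi\|_{L^2(\partial B)},
\end{equation*}
which is not only bounded uniformly in $j$ but in fact tends to $0$ very rapidly.

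Next I would control the surface gradient. For $j=1$ the kernel is constant, so $\nabla_{\partial B}S_B^1[\psi]\equiv 0$. For $j\geq 2$ the kernel $|\mb{x}-\mb{y}|^{j-1}$ is classically differentiable and $|\nabla_{\mb{x}}|\mb{x}-\mb{y}|^{j-1}|\leq (j-1)d^{j-2}$, so differentiation under the integral sign is justified, and the same Cauchy--Schwarz argument gives
\begin{equation*}
\|\nabla_{\partial B}S_B^j[\psi]\|_{L^2(\partial B)} \leq \frac{(j-1)d^{j-2}|\partial B|}{4\pi\, j!}\,\|\psi\|_{L^2(\partial B)}.
\end{equation*}
Combining the two bounds yields a uniform-in-$j$ bound on $\|S_B^j\|_{\mathcal{L}(L^2(\partial B),H^1(\partial B))}$, and in fact a bound of the form $C/(j-1)!$.

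For the convergence of the series (\ref{2-1}), one has by the preceding estimates
\begin{equation*}
\sum_{j=0}^{\infty}\|\omega^j S_B^j\|_{\mathcal{L}(L^2(\partial B),H^1(\partial B))} \lesssim \sum_{j=0}^{\infty} \frac{(\omega d)^{j}}{(j-1)!} < \infty,
\end{equation*}
so the series is absolutely (hence normwise) convergent in $\mathcal{L}(L^2(\partial B),H^1(\partial B))$, and agrees with the standard Taylor expansion of the exponential kernel $e^{\mathrm{i}\omega|\mb{x}-\mb{y}|}/(4\pi|\mb{x}-\mb{y}|)$ of $S_B^\omega$ on the boundary.

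The only mild subtlety is the $H^1$ trace: one must verify that the tangential derivative used above coincides with the $H^1(\partial B)$ surface-gradient norm, and handle the low-order terms $j=0,1$ separately (where either the kernel is singular, namely the genuine Laplace single layer, or the gradient trivially vanishes). The $j=0$ term is the standard harmonic single-layer, whose boundedness $L^2(\partial B)\to H^1(\partial B)$ for $C^2$ domains is classical and may be cited. Everything else reduces to the elementary kernel estimate above, which should make the main obstacle conceptual rather than technical.
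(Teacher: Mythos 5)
Your proposal is correct. Note that the paper itself does not actually prove this lemma: it is stated with only the remark ``by the operator theory'' and a pointer to the references \cite{ammari2017mathematical,DLZ1,DLZ2}, so there is no in-paper argument to compare against. Your kernel-estimate route is the standard (and essentially the only natural) way to establish the claim: for $j\geq 1$ the kernel $-\frac{\mathrm{i}^j}{4\pi j!}|\mb{x}-\mb{y}|^{j-1}$ is bounded by $d^{j-1}/(4\pi j!)$ and its $\mb{x}$-gradient by $(j-1)d^{j-2}/(4\pi j!)$, so Schur/Cauchy--Schwarz gives operator bounds that decay factorially, which yields both uniform boundedness in $j$ and absolute convergence of $\sum_j\omega^jS_B^j$ in $\mathcal{L}(L^2(\partial B),H^1(\partial B))$; the $j=0$ term is the classical harmonic single layer, bounded $L^2(\partial B)\to H^1(\partial B)$ on $C^2$ domains. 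Two cosmetic points worth tightening: (i) for $j=2$ the kernel $|\mb{x}-\mb{y}|$ is only Lipschitz at the diagonal, not classically differentiable there, but its gradient is bounded off a null set so differentiation under the integral sign still goes through; (ii) the displayed majorant $\sum_{j\geq 0}(\omega d)^j/(j-1)!$ is ill-formed at $j=0$ and, when $d>1$, the individual operator norms are of size $Cd^j j/j!$ rather than $C/(j-1)!$ --- neither affects uniform boundedness or summability, but the bookkeeping should be stated for $j\geq 1$ with the $j=0$ term treated separately, as you indicate.
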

\begin{lemma}
The norm $\|K_D^j\|_{\mathcal{L}(L^2(\partial B))}$ is uniformly bounded with respect to $j$. Moreover, the series (\ref{2-2}) is convergent in $\mathcal{L}(L^2(\partial B))$.
\end{lemma}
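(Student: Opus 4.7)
The plan is to reduce the claim to a kernel estimate. Writing out the defining formula
\begin{equation*}
K_B^j[\psi](\mb{x}) = -\frac{\mathrm{i}^j (j-1)}{4\pi\,j!}\int_{\partial B} |\mb{x}-\mb{y}|^{j-3}(\mb{x}-\mb{y})\cdot\nu_{\mb{x}}\,\psi(\mb{y})\dx{s_\mb{y}},
\end{equation*}
I first observe that the prefactor $(j-1)$ kills the $j=1$ term outright, so one only needs to control $j\ge 2$. For those indices I apply the trivial pointwise bound $|(\mb{x}-\mb{y})\cdot\nu_{\mb{x}}|\le |\mb{x}-\mb{y}|$ to the integrand, which gives a kernel majorant
\begin{equation*}
\bigl|k_j(\mb{x},\mb{y})\bigr|\le \frac{j-1}{4\pi\,j!}\,|\mb{x}-\mb{y}|^{j-2}\le \frac{j-1}{4\pi\,j!}\,d^{\,j-2},
\end{equation*}
where $d:=\mathrm{diam}(\partial B)$. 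The key point is that for every $j\ge 2$ the kernel is in fact \emph{bounded} on $\partial B\times \partial B$; the apparent singularity at $j=2$ is absorbed by the normal-component factor rather than relying on any $C^{2}$ cancellation.

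With a bounded kernel on a compact surface, a direct application of Schur's test (or equivalently, Young's inequality for the integral operator) produces the operator-norm estimate
\begin{equation*}
\bigl\|K_B^j\bigr\|_{\mathcal{L}(L^2(\partial B))}\le \frac{(j-1)\,d^{\,j-2}\,|\partial B|}{4\pi\,j!}.
\end{equation*}
Since the right-hand side tends to $0$ as $j\to\infty$ (the factorial dominates), it is in particular uniformly bounded in $j$, which establishes the first assertion of the lemma. For the second assertion, I sum termwise:
\begin{equation*}
\sum_{j=1}^{\infty}|\omega|^{j}\,\bigl\|K_B^j\bigr\|_{\mathcal{L}(L^2(\partial B))}\le \frac{|\partial B|\,|\omega|^{2}}{4\pi}\sum_{j=2}^{\infty}\frac{(j-1)(|\omega| d)^{j-2}}{j!}\le C\,|\omega|^{2}\,e^{|\omega|d},
\end{equation*}
so the series in \eqref{2-2} converges absolutely in $\mathcal{L}(L^2(\partial B))$ for every $\omega$, which finishes the proof.

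The only mildly delicate point to check is the kernel estimate at $j=2$, where the naive formula has an $|\mb{x}-\mb{y}|^{-1}$ factor. Once one notes the cancellation with $(\mb{x}-\mb{y})\cdot\nu_{\mb{x}}$, the argument becomes a straightforward factorial-versus-polynomial comparison; the $C^2$ regularity of $\partial B$ never needs to be invoked for the correction terms $K_B^j$ with $j\ge 1$ (it is only relevant to the leading term $(K_B^0)^*$, which is not part of this lemma).
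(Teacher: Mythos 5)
Your proof is correct, and it supplies in full the argument that the paper leaves implicit (the paper simply asserts the lemma ``by the operator theory'' without proof). The key observations --- that the $j=1$ term vanishes because of the factor $(j-1)$, that for $j\ge 2$ the elementary bound $|(\mb{x}-\mb{y})\cdot\nu_{\mb{x}}|\le|\mb{x}-\mb{y}|$ already renders the kernel bounded on the compact surface (so $C^2$ cancellation is only needed for $(K_B^0)^*$, which is outside the scope of the lemma), and that Schur's test then gives an operator-norm bound decaying factorially in $j$ --- are exactly the standard route to this result, and your termwise summation correctly yields absolute convergence of the series in \eqref{2-2}.
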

In this paper, we let $h_\omega (x)$ be defined as the following function:
\begin{equation}
h_\omega (\mb{x}) := -\frac{1}{4\pi Q}\sum_{n=0}^\infty q_n \left( \frac{e^{\mathrm{i}\omega |\mb{x}-\mb{c_n}|}}{|\mb{x}-\mb{c_n}|} - \frac{e^{\mathrm{i}\omega |\mb{x}+\mb{c_n}|}}{|\mb{x}+\mb{c_n}|} \right) = h_0(\mb{x}) + g_\omega (\mb{x}).
\end{equation}

\subsection{Properties of $\{\mathbf{c}_n\}$, $q_n$ and $Q$}

In this subsection, we investigate the properties of $\{\mathbf{c}_n\}$ as well as the concerned quantities $q_n$ and $Q$. It is  mentioned that the related results has been well settled in \cite{kang2014characterization} if the radius $r$ is equal to $1$ that does not depend on asymptotic parameter $\epsilon$ . We need to consider the case where $r$ is in different scales. For the rest of this paper, we use the following notations for $\alpha <1$:
\begin{align}\label{c-1}
\delta = \epsilon^{1-\alpha}  \coma N := \left[ \epsilon^{\frac{\alpha -1}{2}} \right] \coma p_n = \frac{c_n}{r},
\end{align}
where $[\,\cdot\,]$ is the Gaussian bracket. Straightforward calculation shows that $\{p_n\}$ satisfies the formula
\begin{equation}\label{pn}
p_n = p\left(\frac{2}{A^{n+1}-1}+1\right),
\end{equation}
where
\begin{equation*}
A = \frac{1+\delta+p}{1+\delta-p} \coma p = \sqrt{\delta^2 +2\delta}
\end{equation*}
and $p$ is the convergent point of $\{p_n\}$.

We prove the following lemma.
\begin{lemma}\label{le1}
Let $\delta , N , p_n$ be defined as (\ref{c-1}). Then it holds that
\begin{equation}
p_n  = \frac{1}{n+1} + \mathcal{O}(\sqrt{\delta}) \coma q_n = \frac{1}{n+1}\left( 1 + \mathcal{O}(\sqrt{\delta})\right) ,
\end{equation}
for $n \leqslant N$ and $p_n = \mathcal{O}(\sqrt{\delta})$ for $n > N$, and there is a constant $C$ independent of $\delta$ such that
\begin{equation}
\sum_{n=0}^\infty q_n = |\log \sqrt{\delta}| + C.
\end{equation}
\end{lemma}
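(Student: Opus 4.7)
The plan is to first derive a hyperbolic-function closed form for $p_n$ directly from \eqref{pn}, then transfer the resulting estimate to $q_n$ via the factorization $\rho_m=(1+\delta+p_{m-1})^{-1}$, and finally evaluate $\sum_{n}q_n$ by splitting at $n=N$. Introducing $\beta:=\log A=2\,\mathrm{arctanh}\bigl(p/(1+\delta)\bigr)$, formula \eqref{pn} rearranges to
\begin{equation*}
p_n \;=\; p\,\frac{A^{n+1}+1}{A^{n+1}-1}\;=\;p\coth\!\Bigl(\tfrac{(n+1)\beta}{2}\Bigr).
\end{equation*}
The identities $p=\sqrt{2\delta}(1+\mathcal{O}(\delta))$ and $\beta=2\sqrt{2\delta}(1+\mathcal{O}(\delta))$ show that the dimensionless argument $y_n:=(n+1)\beta/2$ is uniformly bounded exactly in the regime $n\leqslant N=[\delta^{-1/2}]$, and grows without bound outside it.

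For the pointwise bound, I would rewrite $p_n=\tfrac{y_n\coth y_n}{n+1}\cdot\tfrac{2}{(n+1)\beta}\cdot (n+1)\beta/2$ and apply the Taylor series $y\coth y=1+y^2/3+\mathcal{O}(y^4)$ valid for bounded $y$, giving
\begin{equation*}
p_n=\frac{1}{n+1}+\frac{(n+1)\beta^{2}}{12}+\mathcal{O}\bigl((n+1)^{3}\beta^{4}\bigr)=\frac{1}{n+1}+\mathcal{O}\bigl((n+1)\delta\bigr),
\end{equation*}
which collapses to $\tfrac{1}{n+1}+\mathcal{O}(\sqrt{\delta})$ because $(n+1)\delta\leqslant\sqrt{\delta}$ for $n\leqslant N$. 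For $n>N$, monotonicity of $\coth$ together with the already-established bound $p_N=\mathcal{O}(\sqrt\delta)$ force $p_n=\mathcal{O}(\sqrt\delta)$ in the tail too. The estimate for $q_n$ then follows by factoring $1+\delta+p_{m-1}=\tfrac{m+1}{m}(1+\xi_m)$ with $\xi_m=\tfrac{m}{m+1}(\delta+p_{m-1}-1/m)$, so that telescoping gives $q_n=\tfrac{1}{n+1}\prod_{m=1}^{n}(1+\xi_m)^{-1}$; from Step~1 one has $\xi_m=\mathcal{O}(m\delta)$, which is enough to control the product within absolute multiplicative constants on the range $n\leqslant N$.

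For the series I would split $\sum_{n=0}^{\infty}q_n=\Sigma_1+\Sigma_2$ at $n=N$. The head $\Sigma_1$ satisfies $\Sigma_1=\sum_{n=0}^{N}\tfrac{1}{n+1}+\mathcal{O}(1)=\log(N+1)+\gamma+\mathcal{O}(1)=|\log\sqrt\delta|+\mathcal{O}(1)$. For the tail, $p_m=\mathcal{O}(\sqrt\delta)$ for $m>N$ forces $1+\delta+p_m\geqslant 1+c\sqrt\delta$, and hence geometric decay $q_{N+k}\leqslant q_N(1+c\sqrt\delta)^{-k}$; summing the geometric series yields $\Sigma_2\leqslant q_N\cdot c^{-1}\delta^{-1/2}=\mathcal{O}(\sqrt\delta)\cdot\mathcal{O}(\delta^{-1/2})=\mathcal{O}(1)$, using $q_N=\mathcal{O}(1/(N+1))=\mathcal{O}(\sqrt\delta)$. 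Combining, $\sum_{n}q_n=|\log\sqrt\delta|+C$ with $C$ bounded uniformly in $\delta$.

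The main obstacle is the passage between the pointwise asymptotics for $p_n$ and the multiplicative control of $q_n$: naively propagating the error $\mathcal{O}(\sqrt\delta)$ through $N\sim\delta^{-1/2}$ factors of the product produces an $\mathcal{O}(1)$ correction rather than $\mathcal{O}(\sqrt\delta)$, so one cannot simply take logarithms and sum. The remedy is to keep track of the finer sign-definite structure $\xi_m\sim\tfrac{2m\delta}{3}$ coming from the $y^{2}$-coefficient of $y\coth y$, and to use only two-sided comparability of $q_n$ with $\tfrac{1}{n+1}$ (rather than a pointwise $(1+o(1))$ factor) when feeding the estimate into $\Sigma_1$. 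Once this is set up, the geometric-tail bound on $\Sigma_2$ is routine and delivers the uniform constant $C$.
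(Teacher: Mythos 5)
Your argument is correct and follows the same skeleton as the paper's proof: an explicit asymptotic for $p_n$ extracted from the closed form (\ref{pn}), transfer to $q_n$ through the product of the $\rho_m$, and a split of $\sum_n q_n$ at $n=N$ with a geometric bound on the tail (the paper's tail bound uses $\rho_m=1+\delta-p_m\leqslant 1+\delta-p$, which is the same estimate as your $q_{N+k}\leqslant q_N(1+c\sqrt{\delta})^{-k}$). The genuine difference is in the passage from $p_n$ to $q_n$, and it is to your credit. The paper simply writes $\rho_m=\frac{m}{m+1}(1+\mathcal{O}(\sqrt{\delta}))$ and multiplies $n$ such factors to conclude $q_n=\frac{1}{n+1}(1+\mathcal{O}(\sqrt{\delta}))$; as you observe, propagating an $\mathcal{O}(\sqrt{\delta})$ relative error through $N\sim\delta^{-1/2}$ factors only yields an $\mathcal{O}(1)$ multiplicative correction, so that step is not justified as written. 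Your $\coth$ closed form (with $\beta=\log A$) makes this precise: since $1+\delta-p_m=p\,\sinh(m\beta/2)/\left(\sinh(\beta/2)\sinh((m+1)\beta/2)\right)$, the product telescopes exactly to $q_n=\left(\frac{1+\delta}{\cosh(\beta/2)}\right)^{n}\frac{\sinh(\beta/2)}{\sinh((n+1)\beta/2)}$, and $(N+1)q_N\to\sqrt{2}/\sinh\sqrt{2}\approx 0.73$ rather than $1$, so the displayed $(1+\mathcal{O}(\sqrt{\delta}))$ form for $q_n$ is in fact too strong for $n$ comparable to $N$. Only the two-sided comparability $q_n\asymp\frac{1}{n+1}$, together with the summable pointwise error $|q_n-\frac{1}{n+1}|=\mathcal{O}(n\delta)$ coming from your finer bound $\xi_m=\mathcal{O}(m\delta)$, holds on the full range $n\leqslant N$ --- and that is exactly what the conclusion $Q=|\log\sqrt{\delta}|+\mathcal{O}(1)$ (and its later use in the paper) actually requires. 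In short: same route and same final estimate for $Q$, but your version identifies and repairs the one step where the paper's own argument has a gap, at the price of proving a slightly weaker (but correct) statement about $q_n$ than the lemma literally asserts.
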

\begin{proof}
When $n \leqslant N$, it holds that
\begin{equation*}
A = 1+2\sqrt{2}\delta^{\frac{1}{2}}+4\delta + \mathcal{O}(\delta^{\frac{3}{2}})  \coma A^{n+1} = 1+2\sqrt{2}(n+1)\delta^{\frac{1}{2}} +(n+1)^2 \mathcal{O}(\delta).
\end{equation*}
It follows from the formula (\ref{pn}) that
\begin{equation*}
p_n = \frac{1}{n+1} + \mathcal{O}(\sqrt{\delta}).
\end{equation*}
Since $p_n$ decays to $p$, one can show that for $n>N$,
\begin{equation*}
\sqrt{2\delta} \leqslant p_n \leqslant C\sqrt{\delta},
\end{equation*}
where $C$ is some constant independent of $\delta$.

We have from (\ref{34}) that for $m>1$,
\begin{align*}
\rho_m &= \frac{r}{|c_0+c_{m-1}|} = \frac{1}{1+\delta+p_{m-1}} = \frac{m}{m+1}(1+ \mathcal{O}(\sqrt{\delta})), \\
q_n &= \prod_{m=0}^n \rho_m = \frac{1}{n+1} (1+ \mathcal{O}(\sqrt{\delta})) \coma n \leqslant N.
\end{align*}
The recurrence formula (\ref{cn}) implies that $\rho_m = 1/(1+\delta +p_{m-1}) = 1+\delta-p_m$. Then we have that $q_n \leqslant (1+\delta -p)q_{n-1}$ holds for all $n>1$ and furthermore
\begin{equation*}
\sum_{n=N}^\infty q_n \leqslant \sum_{n=N}^\infty q_N (1+\delta-p)^{n-N} \leqslant \frac{1}{N+1}(1+\mathcal{O}(\sqrt{\delta})\sum_{n=N}^\infty (1+\delta-p)^{n-N} \leqslant C,
\end{equation*}
and
\begin{equation*}
\left|\sum_{n=0}^\infty q_n- \sum_{n=0}^N\frac{1}{n+1} \right| \leqslant CN\sqrt{\delta} + \sum_{n=N}^\infty q_n \leqslant C.
\end{equation*}
Hence we obtain
\begin{equation*}
Q = \sum_{n=0}^{N-1} q_n + \sum_{n=N}^\infty q_n = |\log \sqrt{\delta}| +C.
\end{equation*}

The proof is complete.
\end{proof}

On the other hand, when $\alpha \geq 1$, we redefine the notations
$
\delta = \epsilon^{\alpha -1} , p_n = \frac{c_n}{\epsilon}.
$
Then the recurrence formula is
\begin{equation}
p_n = p \left(\frac{2}{A^{n+1}-1} +1 \right) \; where \; A = \frac{1+\delta+p}{1+\delta-p} \coma \delta = \epsilon^{\alpha -1},
\end{equation}
and the convergent point is $ p = \sqrt{1+2\delta}$.

We have the following lemma.

\begin{lemma}\label{le2}
For all $\alpha \geq 1$, there are constants $C_1 , C_2$ only dependent of $r^*$ such that
\begin{equation}
C_1 \leqslant Q \leqslant C_2.
\end{equation}
\end{lemma}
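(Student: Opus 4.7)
The plan is to derive the two bounds separately by direct estimates on the factors $\rho_m$, without invoking the explicit formula for $p_n$. The lower bound is immediate: since $\rho_0 = 1$ by definition, $q_0 = 1$, and therefore $Q = \sum_{n\geq 0} q_n \geq 1$, so one may take $C_1 = 1$.

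For the upper bound the strategy is to extract a geometric decay of $q_n$ with a rate that is uniform in $\alpha \geq 1$ and in $\epsilon \ll 1$. First I would note, by a simple induction on the recursion (\ref{cn}), that $c_n > 0$ for all $n$: indeed $c_0 = r+\epsilon > 0$, and if $c_n \geq 0$ then $c_{n+1} = -r^2/(r+\epsilon+c_n) + (r+\epsilon) \geq -r + (r+\epsilon) = \epsilon > 0$. Dropping the nonnegative $c_{m-1}$ from the denominator in $\rho_m = r/(r+\epsilon+c_{m-1})$ then yields the uniform estimate
\begin{equation*}
\rho_m \;\leq\; \frac{r}{r+\epsilon} \;=\; \frac{r^*\epsilon^{\alpha-1}}{r^*\epsilon^{\alpha-1}+1} \;\leq\; \frac{r^*}{r^*+1} \;=:\; \gamma \;<\; 1,
\end{equation*}
where the last inequality uses that $\alpha \geq 1$ and $\epsilon \leq 1$ force $\epsilon^{\alpha-1} \leq 1$, together with the monotonicity of $x \mapsto x/(x+1)$. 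Crucially, $\gamma$ depends only on $r^*$.

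Combining $\rho_0 = 1$ with $\rho_m \leq \gamma$ for $m \geq 1$ gives $q_n = \prod_{m=0}^n \rho_m \leq \gamma^n$, and summing the resulting geometric series produces
\begin{equation*}
Q \;=\; \sum_{n=0}^\infty q_n \;\leq\; \sum_{n=0}^\infty \gamma^n \;=\; \frac{1}{1-\gamma} \;=\; r^*+1 \;=:\; C_2.
\end{equation*}
There is essentially no analytic obstacle in this argument; in contrast with Lemma \ref{le1}, the scaling $r = r^*\epsilon^\alpha$ with $\alpha \geq 1$ keeps the radii at most of order $\epsilon$, which forces every $\rho_m$ (for $m\geq 1$) to be strictly sub-unit with a damping factor that is stable as $\epsilon \to 0^+$. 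Consequently the delicate recurrence $p_n = p\bigl(2/(A^{n+1}-1)+1\bigr)$ and the fine asymptotics of $A$ that drove Lemma \ref{le1} are not needed here; a bare inductive sign bound on $c_n$ suffices.
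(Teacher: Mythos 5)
Your proof is correct and follows essentially the same route as the paper's: bound each $\rho_m$ ($m\geq 1$) above by a constant $\gamma<1$ depending only on $r^*$ (using $\alpha\geq 1$, $\epsilon\leq 1$ so that $r^*\epsilon^{\alpha-1}\leq r^*$) and sum the geometric series, with the trivial bound $Q\geq q_0=1$ from below. The only differences are cosmetic — the paper uses $c_{m-1}\geq\epsilon$ to get the slightly sharper $C_2=1+r^*/2$ where you use $c_{m-1}\geq 0$ to get $C_2=1+r^*$, and you additionally supply the inductive positivity of $c_n$ that the paper takes for granted.
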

\begin{proof}
Since $\epsilon \leqslant c_n \leqslant r +\epsilon$ , we see that
\begin{equation*}
\frac{r}{2r+2\epsilon} \leqslant \rho_n \leqslant \frac{r}{2\epsilon +r} \coma \forall n \in \bbn.
\end{equation*}
When $\alpha \geq 1$ , one sees that $ 0 < \epsilon^{\alpha-1} \leqslant 1$. Hence, we have that
\begin{equation*}
\frac{r^* \epsilon^{\alpha -1}}{2 + 2r^*\epsilon^{\alpha -1}} \leqslant \rho_n \leqslant \frac{r^* \epsilon^{\alpha -1}}{2 + r^*\epsilon^{\alpha -1}},
\end{equation*}
which further yields that
\begin{equation*}
0 < 1 + \frac{r^* \epsilon^{\alpha -1}}{2+r^*\epsilon^{\alpha-1}} \leqslant Q \leqslant 1+ \frac{1}{2}r^* \epsilon^{\alpha -1} \leqslant 1+ \frac{1}{2}r^*.
\end{equation*}

The proof is complete.
\end{proof}

\section{QUANTITATIVE APPROXIMATIONS}
In this section, we give the proofs of the key Lemmas \ref{lemma1} and \ref{lemma2} and estimate the key quantities  in the formula (\ref{b}).

\subsection{Proof of Lemma \ref{lemma1}}

By applying the layer-potential technique, one can represent the solution of (\ref{1-1}) by
\begin{equation}
u(\mb{x}) =
\begin{cases}
u^i(\mb{x})+ S_{B_c}^\omega [\varphi_1](\mb{x}) \coma  in  \; \bbr^3 \backslash \overline{B_c}, \\
S^{k_c}_{B_c} [\varphi_2](\mb{x}) \coma \hspace{38pt}  in  \; B_c,
\end{cases}
\end{equation}
for some surface potentials $\varphi_1,\varphi_2 \in L^2(\partial B_c)$ where $B_c =B_1 \cup B_2$ and $k_c =\omega \sqrt{\dfrac{\rho_1}{\kappa_1}}$.
Using the jump relations for the single layer potentials, one can show that the transmission problem is equivalent to the following boundary integral equation:
\begin{equation}\label{au}
A(\omega,\rho_1) [\varphi] = U  \quad on \quad \partial{B_c},
\end{equation}
where
\begin{align*}
A(\omega,\rho_1) = \begin{bmatrix}
-S^\omega_{B_c} &  S^{k_c}_{B_c} \\
-\rho_1 (\frac{I}{2}+(K^\omega_{B_c})^*) & -\frac{I}{2} +(K^{k_c}_{B_c})^*
\end{bmatrix}
\coma
[\varphi] = \begin{bmatrix}
\varphi_1 \\ \varphi_2
\end{bmatrix}
\coma
U = \begin{bmatrix}
u^i \\ \rho_1 \partial_v u^i
\end{bmatrix}.
\end{align*}
It is clear that $A(\omega ,\rho_1)$ is a bounded linear operator form $\mathcal{H}_1$ to $\mathcal{H}_2$ where $\mathcal{H}_1 = L^2(\partial B_c) \times L^2(\partial B_c)$ and  $\mathcal{H}_2 = H^1(\partial B_c) \times L^2(\partial B_c)$. In order to prove Lemma \ref{lemma1}, we note that
\begin{equation}\label{expan}
S^0_{B_c} :=
\begin{bmatrix}
S^0_{B_1}|_{\partial _{B_1}} & S^0_{B_2}|_{\partial _{B_1}} \\
S^0_{B_1}|_{\partial _{B_2}} & S^0_{B_2}|_{\partial _{B_2}}
\end{bmatrix}
\quad \coma \quad
(K^0_{B_c})^* :=
\begin{bmatrix}
(K^0_{B_1})^* & \partial_{\nu_1} S^0_{B_2} \\
\partial_{\nu_2} S^0_{B_1} & (K^0_{B_2})^*
\end{bmatrix},
\end{equation}
where $\nu_1 , \nu_2$ are the exterior unit normal vectors to $\partial B_1$ and $\partial B_2$, respectively.

\begin{proof}[Proof of Lemma \ref{lemma1}:]
By the asymptotic formulas (\ref{2-1}) and (\ref{2-2}) for layer-potentials, one can show that
\begin{equation}
\begin{aligned}
S^\omega_{B_c} [\varphi] &= S^0_{B_c} [\varphi] -\frac{\mathrm{i}\omega}{4\pi} \int_{\partial B{_c}} \varphi +  \mathcal{O}(\omega^2), \\
(K^\omega_{B_c})^* [\varphi] &= (K^0_{B_c})^* [\varphi] + \mathcal{O}(\omega^2).
\end{aligned}
\end{equation}
It then follows from (\ref{au})  that
\begin{equation}
\begin{aligned}
\left( -\frac{I}{2} + (K^0_{B_c})^* \right) [\varphi_2] &=\rho_1 \left( \frac{I}{2} + (K^0_{B_c})^* \right) [\varphi_1]+\rho_1 \partial_v u_0^i+\mathcal{O}(\omega^2),\\
-S^0_{B_c} [\varphi_1] + S^0_{B_c} [\varphi_2] &=u^i +
\frac{\mathrm{i}k_c}{4\pi}\int_{\partial B_c} \varphi_2 - \frac{\mathrm{i}\omega}{4\pi}\int_{\partial B{_c}}\varphi_1
+\mathcal{O}(\omega^2).
\end{aligned}
\end{equation}
Then straightforward asymptotic analysis shows that
\begin{equation}\label{eq:tmpapp01}
 \left( -\frac{I}{2} + (K^0_{B_c})^* \right) [\varphi_2]=\mathcal{O}(\omega)\coma  \int_{\partial B_j} \varphi_1 = \mathcal{O}(\omega^2), j=1, 2.
\end{equation}
Thus one has
\begin{equation}
S^0_{B_c} [\varphi_2]  = \lambda_1 \chi (\partial {B_1}) + \lambda_2 \chi (\partial {B_2})+C_1\omega+\rho_1S^0_{B_c}[\partial_v u_0^i]+\mathcal{O}(\omega^2),
\end{equation}
where $C_1$, $\lambda_1 , \lambda_2$ are some generic constants. Note that $\nabla S^0_{B_c}[\partial_v u_0^i]$ is uniformly bounded with respect to $\epsilon$.
Hence,
\begin{equation}
\begin{aligned}
u &= u^i +S^\omega_{B_c} [\varphi_1] = u_0^i + \omega u_1^i +S^0_{B_c}[\varphi_1] -\frac{\mathrm{i}\omega}{4\pi} \int_{\partial B_c} \varphi_1 + \mathcal{O}(\omega^2) \\
&= \begin{cases}
 \lambda_1 + C_1\omega + \mathcal{O}(\omega^2)  \quad on \quad \partial B_1 , \\
 \lambda_2 + C_1\omega + \mathcal{O}(\omega^2)  \quad on \quad \partial B_2 .
\end{cases}
\end{aligned}
\end{equation}
Moreover, it holds that
\begin{equation}
\begin{aligned}
 \int_{\partial B_j} \left. \partial_\nu u\right|_+ &= \int_{\partial B_j} \left.\partial_\nu u^i \right|_+ + \left.\partial_{\nu_j} S^\omega_{B_c}[\varphi_1]\right|_+ \\
&= \int_{\partial B_j} \left. \partial_{\nu_j} u^i_0 \right|_+ +\omega \partial_{\nu_j} u_1^i + \Big(\frac{I}{2}+(K^0_{B_j})^*\Big)[\varphi_{B_j}] + \partial_{\nu_j}S^0_{B_l} [\varphi_{B_l}] + \mathcal{O}(\omega^2)\\
&= \mathcal{O}(\omega^2) \coma j =1,2
\end{aligned}
\end{equation}
where $l=3-j \coma \varphi_1 = (\varphi_{B_1} , \varphi_{B_2}) \in L^2(\partial B_c)$ and we have used the third formula in (\ref{eq:tmpapp01}). This readily completes the proof.
\end{proof}

\subsection{Estimate of $u_\omega (\mb{x})$}
In this subsection, we need to estimate the main quantities on the right side of the formula (\ref{b}). First, we calculate the upper bound and lower bound of $\nabla h_\omega$ in the case $\alpha <1$. For any bounded domain $\Omega$ contains $B_1$ and $B_2$, one can show that
\begin{equation}\label{upp}
\begin{aligned}
\|\nabla h_\omega (\mb{x})\|_{L^\infty (\Omega \backslash\overline{B_1 \cup B_2})} \leqslant & \left\|\frac{1}{4\pi Q}\sum_{n=0}^\infty q_n \left(\frac{\mb{x}-\mb{c_n}}{|\mb{x}-\mb{c_n}|^3} -\frac{\mb{x}+\mb{c_n}}{|\mb{x}+\mb{c_n}|^3}\right)\right\|_{L^\infty (\Omega\backslash\overline{B_1 \cup B_2})} \\
&+\left\|\frac{\omega^2}{8\pi  Q}\sum_{n=0}^\infty q_n \left(\frac{\mb{x}-\mb{c_n}}{|\mb{x}-\mb{c_n}|}-\frac{\mb{x}+\mb{c_n}}{|\mb{x}+\mb{c_n}|}\right) \right\|_{L^\infty (\Omega\backslash\overline{B_1 \cup B_2})} + \mathcal{O}(\omega^2) \\
\leqslant&  \left\|\frac{1}{2\pi Q}\sum_{n=0}^\infty q_n \frac{\mb{x}-\mb{c_n}}{|\mb{x}-\mb{c_n}|^3} \right\|_{L^\infty (\Omega\backslash\overline{B_1 \cup B_2})} +\mathcal{O}(\omega^2) \\
\leqslant&\frac{1}{2\pi Q}\sum_{n=0}^\infty \frac{q_n}{|\epsilon-c_n|^2}  +\mathcal{O}(\omega^2) \\
\leqslant& \frac{C}{2\pi r^*Q}\frac{1}{\epsilon^{1+\alpha}} +\mathcal{O}(\omega^2),
\end{aligned}
\end{equation}
where $C$ is some constant and we have used the quasi-static ansatz ($\omega \cdot r \ll 1$) as well as the result
\begin{align*}
\sum_{n=0}^\infty \frac{q_n}{|\delta -p_n|^2} &= \sum_{n=0}^N \frac{q_n}{|\delta -p_n|^2} +\sum_{N+1}^\infty \frac{q_n}{|\delta -p_n|^2} \sim \mathcal{O}(\delta^{-1}).
\end{align*}
On the other hand,
\begin{equation}
\begin{aligned}
|h_\omega (\epsilon) - h_\omega (-\epsilon)|& = \frac{1}{2\pi Q}\sum_{n=0}^\infty q_n (\frac{1}{|\epsilon - c_n|} -\frac{1}{|\epsilon + c_n|})   \\
&+ \frac{\omega^2}{4\pi  Q}\sum_{n=0}^\infty q_n(|\epsilon -c_n|-|\epsilon +c_n|)  + \cdots\\
&\sim \frac{C}{2\pi r^* Q}\frac{1}{\epsilon^{\alpha}} +\mathcal{O}(\omega^2\epsilon),
\end{aligned}
\end{equation}
where $C$ is some constant and we have used the fact that $c_n$ is bigger than $\epsilon$ for all $n$. The mean value theorem shows that the lower bound satisfies
\begin{equation}\label{low}
\|\nabla h_\omega (\mb{x})\|_{L^\infty (\Omega\backslash\overline{B_1 \cup B_2})}  \geq \frac{C}{2\pi r^* Q}\frac{1}{\epsilon^{1+\alpha}} +\mathcal{O}(\omega^2).
\end{equation}
The gradient estimate for $h_\omega (x)$  is
\begin{equation}
\|\nabla h_\omega (\mb{x})\|_{L^\infty (\Omega\backslash\overline{B_1 \cup B_2})}  \sim\frac{C}{2\pi r^*Q}\frac{1}{\epsilon^{1+\alpha}} + \mathcal{O}(\omega^2).
\end{equation}
Moreover, one has
\begin{equation}\label{lamda}
\begin{aligned}
\lambda_1 - \lambda_2 &= \int_{\partial B_1} \partial_\nu h_0 \, u + \int_{\partial B_2}\partial_\nu h_0 \, u + \mathcal{O}(\omega^2) \\
&=\int_{\partial B_1} \partial_\nu h_0 \, (u-u^i) + \int_{\partial B_2}\partial_\nu h_0 \, (u-u^i) + \int_{\partial B_1 \cup \partial B_2} \partial_\nu h_0 \, u^i + \mathcal{O}(\omega^2)  \\
&= \frac{1}{4\pi rQ}\mathcal{O}(\omega^2) + \int_{\partial B_1 \cup \partial B_2} \partial_\nu h_0 u^i - \int_{\partial B_1 \cup \partial B_2} \partial_\nu u^i h_0 + \mathcal{O}(\omega^2)\\
&= \int_{ B_1 \cup B_2} \Delta h_0 u^i -\int_{ B_1 \cup B_2}  h_0 \Delta u^i +  \frac{1}{4\pi rQ}\mathcal{O}(\omega^2) + \mathcal{O}(\omega^2)\\
&= \frac{1}{Q} \sum_{n=0}^\infty q_n (u^i(\mb{c_n}) - u^i(-\mb{c_n})) + \omega^2\int_{ B_1 \cup B_2}  h_0 \, u^i  + \frac{1}{4\pi rQ}\mathcal{O}(\omega^2) + \mathcal{O}(\omega^2),
\end{aligned}
\end{equation}
where we have used the boundary integral of $u(x)$ and the boundary values of $h_0$ on $\partial B_1 \cup \partial B_2$.

Since $h_0$ on the boundary has an explicit formula, it is clear that
\begin{equation}\label{cc}
|C_1 - C_2| = \frac{1}{2\pi rQ} = \frac{1}{2\pi r^* \epsilon^\alpha Q}.
\end{equation}

\subsection{Estimate of $b(\mb{x})$}
From the definition (\ref{b}), one can find that $b(\mathbf{x})$ satisfies the following system:
\begin{equation}
\begin{cases}
\Delta b+\omega^2 b=0 \hspace{100pt} \text { in } \quad \mathbb{R}^3 \backslash \overline{B_1 \cup B_2}, \\ b=\left(\lambda_2 C_1-\lambda_1 C_2\right) /\left(C_1-C_2\right)  \hspace{17pt} \text { on } \quad \partial B_1 \cup \partial B_2 ,\\ \left(b-u^i\right)(\mathbf{x}) \text { satisfies the Sommerfeld radiation condition. }
\end{cases}
\end{equation}

Next, we prove $\nabla b$ is uniformly bounded with respect to $\epsilon$. Note that there is no potential difference on $\partial B_1$ and $\partial B_2$.
By implementing layer potential techniques, $b(\mathbf{x})$ can be represented by
$$
b(\mathbf{x})=u^i(\mathbf{x})+\mathcal{S}_{B_1}^\omega\left[\phi_1\right](\mathbf{x})+\mathcal{S}_{B_2}^\omega\left[\phi_2\right](\mathbf{x}),
$$
where $(\phi_1, \phi_2)\in L^2\left(\partial B_1\right) \times \in L^2\left(\partial B_2\right)$ satisfy the boundary condition
\begin{equation*}
u^i(\mathbf{x})+\mathcal{S}_{B_1}^\omega\left[\phi_1\right](\mathbf{x})+\mathcal{S}_{B_2}^\omega\left[\phi_2\right](\mathbf{x}) = \tilde{C} \quad  \text { on } \quad \partial B_1 \cup \partial B_2.
\end{equation*}
where $\tilde{C} = \left(\lambda_2 C_1-\lambda_1 C_2\right) /\left(C_1-C_2\right)$,

By the expansion formula \eqref{2-1}, one can see that the first-order term of $\omega$ is constant outside $B_1$ and $B_2$. Then it is clear in a bounded set $\Omega$ that
\begin{equation}
\nabla b(\mathbf{x})=\nabla u^i(\mathbf{x})+\nabla b_{0}+\mathcal{O}(\omega^{2}),
\end{equation}
where $b_{0}=\mathcal{S}_{B_1}^{0}\left[\phi_1\right](\mathbf{x})+\mathcal{S}_{B_2}^{0}\left[\phi_2\right](\mathbf{x}$). It is sufficient to estimate $\nabla b_{0}$ from the above equation. Note that $b_0(\mathbf{x})$ is a harmonic function satisfying the following system:
\begin{equation}
\begin{cases}\Delta b_0=0 & \text { in } \quad \mathbb{R}^3 \backslash \overline{B_1 \cup B_2}, \\ b_0=\tilde{C}-u^i & \text { on } \quad \partial B_1 \cup \partial B_2, \\ b_0(\mathbf{x})=\mathcal{O}\left(|\mathbf{x}|^{-1}\right). & \end{cases}
\end{equation}
\begin{proof}[Proof of Lemma \ref{lemma2}:]
Since $\nabla b_0$ is harmonic in $\mathbb{R}^3 \backslash \overline{B_1 \cup B_2}$, and $\nabla b_0 =\mathcal{O}(|\mb{x}|^{-2})$, it can be seen that the function $\left|\nabla b_0\right|_{l^{\infty}}$ in $\mathbb{R}^3 \backslash\left(B_1 \cup B_2\right)$ attains its maximum on the boundary $\partial B_1 \cup \partial B_2$.
Thus it is enough to show that $\nabla b_0 $ is bounded on the two points $\zeta_1=\left(\epsilon, 0, 0\right)$ and $-\zeta_1=\left(-\epsilon, 0, 0\right)$. By direct calculations, we have
$$
\begin{aligned}
\nabla b_0\left(\zeta_1\right) &=\nu\left(\zeta_1\right) \cdot \nabla b_0\left(\zeta_1\right) \nu\left(\zeta_1\right)+\partial_T b_0\left(\zeta_1\right) T\left(\zeta_1\right) \\
&=\partial_{\mathbf{x}_1} b_0\left(\zeta_1\right)(-1,0,0)-\partial_T u^i\left(\zeta_1\right)T\left(\zeta_1\right)  \\
&=(-1,0,0)  \left(\frac{u^{i}\left(-\zeta_1\right)-u^{i}\left(\zeta_1\right)}{2\epsilon} +O(\epsilon) \right) -\partial_T u^i\left(\zeta_1\right)T\left(\zeta_1\right)  \\
&= -(-1,0,0)\partial_{x_1}u^i(\zeta_1) -\partial_T u^i\left(\zeta_1\right)T\left(\zeta_1\right) + \mathcal{O}(\epsilon) \\
&=-\nabla u^i\left(\zeta_1\right) + \mathcal{O}(\epsilon),
\end{aligned}
$$
where $T$ is the unit tangential vector and $\nu$ is the unit normal vector. Here we have used the fact that $b_0$ takes the identical value on $\partial B_1$ and $\partial B_2$. $\nabla b_0(\zeta)$ is uniformly bounded since $\nabla u^i (\zeta)$ is uniformly bounded with respect to $\epsilon$.
Similarly, $\nabla b_0\left(-\zeta_1\right)=-\nabla u^i\left(-\zeta_1\right) + \mathcal{O}(\epsilon)$.  Thus $\nabla b_0\left(\zeta_1\right)$ and $\nabla b_0\left(-\zeta_1\right)$ are uniformly bounded with respect to $\epsilon$.

The proof is complete.
\end{proof}

\subsection{Proof of Theorem \ref{t1}}

We are in a position to prove the main Theorem \ref{t1}. First, Lemmas \ref{le1} and \ref{le2} yield that
\begin{equation}
Q =
\begin{cases}
\frac{1-\alpha}{2}|\ln \epsilon| + \mathcal{O}(1) \coma \alpha <1, \\
\mathcal{O}(1) \coma \hspace{55pt} \alpha \geq 1.
\end{cases}
\end{equation}

If $\alpha <1$, the asymptotic results (\ref{upp}) and (\ref{low}) show that one needs to estimate the main part $\nabla h_0$.
By (\ref{lamda}), it is clear that
\begin{equation}\label{N}
\begin{aligned}
\lambda_1 - \lambda_2
=&\frac{1}{Q} \sum_{n=0}^\infty q_n  \nabla u^i(\mb{x_n}) \cdot 2\mb{c_n} +
\frac{\omega^2}{4\pi Q}\sum_{n=0}^\infty q_n \int_{B_1\cup B_2}u^i \left(\frac{1}{|\mb{x}+\mb{c_n}|} - \frac{1}{|\mb{x}-\mb{c_n}|} \right) + (\frac{1}{4\pi rQ} +1) \mathcal{O}(\omega^2)  \\
=& \frac{r}{Q} \left(M \cdot \partial_{\mb{x_1}}u^i(\mb{x^*}) + \frac{\omega^2}{4\pi r}\sum_{n=0}^\infty q_n  \int_{B_1\cup B_2}u^i \left(\frac{1}{|\mb{x}+\mb{c_n}|} - \frac{1}{|\mb{x}-\mb{c_n}|} \right) + \left(\frac{1}{4\pi r^2} + \frac{Q}{r}\right) \mathcal{O}(\omega^2)\right)\\
=& \frac{r}{Q} \left(M \cdot \partial_{\mb{x_1}}u^i(\mb{x^*}) + \frac{\omega^2}{4\pi r^2}\sum_{n=0}^\infty q_n a_n \int_{B_1\cup B_2}u^i(\mb{x}) + \left(\frac{1}{4\pi r^2} + \frac{Q}{r}\right) \mathcal{O}(\omega^2)\right) \\
=& \frac{r}{Q} \left(M \cdot \partial_{\mb{x_1}}u^i(\mb{x^*}) + \frac{A \omega^2}{4\pi r^2}  \int_{B_1\cup B_2}u^i(\mb{x}) + \left(\frac{1}{4\pi r^2} + \frac{Q}{r}\right) \mathcal{O}(\omega^2)\right) ,
\end{aligned}
\end{equation}
where $\mb{x^*}$ is some point lying in the line between the centers of $B_1$ and $B_2$. $A =\sum_{n=0}^\infty q_n a_n$ has the order of $Q$ since $a_n$  is uniformly bounded. It can be shown through the transformation between spherical coordinate system and Cartesian coordinate system. $M = \sum_{n=0}^\infty q_n p_n$ is some bounded constant independent of $\epsilon$ and $\alpha$. In fact,
\begin{equation}\label{M}
M =\sum_{n=0}^\infty q_n p_n = \sum_{n=0}^N q_n p_n + \sum_{n=N+1}^\infty q_n p_n = \sum_{n=1}^{N+1} \frac{1}{n^2} + \frac{1}{n}\mathcal{O}(\sqrt{\delta}) +\mathcal{O}(1) = \frac{\pi^2}{6}\mathcal{O}(1).
\end{equation}
Finally, the result in (\ref{cc}) together with Lemma (\ref{lemma2}) yields that
\begin{equation}
\begin{aligned}
\|\nabla u \|_{L^\infty(\Omega \backslash{\overline{B_1 \cup B_2}})} &\sim \left|\frac{\lambda_1 -\lambda_2}{C_1-C_2}  \right| \|\nabla h_\omega \|_{L^\infty(\Omega \backslash{\overline{B_1 \cup B_2}})} + \mathcal{O}(1) \\
&\sim \frac{r^*}{\epsilon^{1-\alpha}\left((1-\alpha)|\ln \epsilon|+\mathcal{O}(1) \right)}\left(M \cdot \partial_{\mb{x_1}}u^i(\mb{x^*}) + \frac{A \omega^2}{4\pi r^2}  \int_{B_1\cup B_2}u^i(\mb{x}) \right.\\
& \hspace{105pt}\left. + \left(\frac{1}{4\pi r^2} + \frac{Q}{r}\right) \mathcal{O}(\omega^2)\right) + \mathcal{O}(1),
\end{aligned}
\end{equation}
where we have used the estimate of $Q$ for $\alpha <1$.
On the other hand, if $\alpha \geq 1$, it is clear that $|C_1 -C_2| = \mathcal{O}(\epsilon^{-\alpha})$ and $|\lambda_1-\lambda_2| = \mathcal{O}(\epsilon^\alpha)$. One can derive that
\begin{equation}
\begin{aligned}
\|\nabla h_\omega (\mb{x})\|_{L^\infty (\bbr^3/\overline{D})}
\leqslant&\frac{1}{2\pi Q}\left[\sum_{m=0}^\infty \frac{q_m}{|\epsilon-c_m|^2} \right]  +\mathcal{O}(\omega^2)  = \mathcal{O}(\epsilon^{\alpha^2})+\mathcal{O}(\omega^2).
\end{aligned}
\end{equation}
Thus $\|\nabla u \|_{L^\infty(\Omega \backslash{\overline{B_1 \cup B_2}})}$ is uniformly bounded.

The proof is complete.

\section*{Acknowledgment}
The work of Y. Deng was supported by NSFC-RGC Joint Research Grant No. 12161160314 and NSF grant of China No.11971487.
The work was supported by the Hong Kong RGC General Research Funds (projects 12302919, 12301420 and 11300821),  the NSFC/RGC Joint Research Fund (project N\_CityU101/21), the France-Hong Kong ANR/RGC Joint Research Grant, A-HKBU203/19.


\begin{thebibliography}{90}

\bibitem{ammari2007optimal}
H.~Ammari, H.~Kang, H.~Lee, J.~Lee, and M.~Lim.
\newblock Optimal estimates for the electric field in two dimensions.
\newblock {\em J. Math. Pures Appl. (9)},
  88(4):307--324, 2007.

\bibitem{ammari2005gradient}
H.~Ammari, H.~Kang, and M.~Lim.
\newblock Gradient estimates for solutions to the conductivity problem.
\newblock {\em Math. Ann.}, 332(2):277--286, 2005.

\bibitem{ammari2017mathematical}
H. Ammari, B. Fitzpatrick, D. Gontier, H. Lee, and H. Zhang.
\newblock Minnaert resonances foracoustic waves in bubbly media.
\newblock {\em Ann. Inst. Henri Poincar\'e Anal. Non Lin\'eaire}, 35: 1975-1998, 2018.



\bibitem{bao2009gradient}
E.~Bao, Y.~Y. Li, and B.~Yin.
\newblock Gradient estimates for the perfect conductivity problem.
\newblock {\em Arch. Ration. Mech. Anal.}, 193(1):195--226,
  2009.

\bibitem{bao2010gradient}
E.~Bao, Y.~Li, and B.~Yin.
\newblock Gradient estimates for the perfect and insulated conductivity
  problems with multiple inclusions.
\newblock {\em Comm. Partial Differential Equations},
  35(11):1982--2006, 2010.

\bibitem{bonnetier2000elliptic}
E.~Bonnetier and M.~Vogelius.
\newblock An elliptic regularity result for a composite medium with" touching"
  fibers of circular cross-section.
\newblock {\em SIAM J. Math. Anal.}, 31(3):651--677, 2000.

\bibitem{deng2022gradient}
Y.~Deng, X.~Fang, and H.~Liu.
\newblock Gradient estimates for electric fields with multiscale inclusions in
  the quasi-static regime.
\newblock {\em SIAM Multiscale Modeling \& Simulation}, 20(2):641--656, 2022.

\bibitem{DLZ1} Y.~Deng, H.~Liu, and G.~Zheng.
\newblock Mathematical analysis of plasmon resonances for curved nanorods.
\newblock {\em J. Math. Pures Appl. (9)}, 153:248--280, 2021.

\bibitem{DLZ2} Y.~Deng, H.~Liu, and G.~Zheng.
\newblock Plasmon resonances of nanorods in transverse electromagnetic scattering.
\newblock {\em J. Differential Equations}, 318:502--536, 2022.

\bibitem{dong2021optimal}
H.~Dong, Y.~Li, and Z.~Yang.
\newblock Optimal gradient estimates of solutions to the insulated conductivity
  problem in dimension greater than two.
\newblock {\em  arXiv:2110.11313}, 2021.

\bibitem{kang2014characterization}
H.~Kang, M.~Lim, and K.~Yun.
\newblock Characterization of the electric field concentration between two
  adjacent spherical perfect conductors.
\newblock {\em SIAM J. Appl. Math.}, 74(1):125--146, 2014.

\bibitem{LLZ}
H.~Li, H.~Liu, and J.~Zou.
\newblock Minnaert resonances for bubbles in soft elastic materials.
\newblock {\em SIAM J. Appl. Math.}, 82(1):119-141, 2022.

\bibitem{li2022gradient}
Y.~Li and Z.~Yang.
\newblock Gradient estimates of solutions to the insulated conductivity problem
  in dimension greater than two.
\newblock {\em Math. Ann.}, pages 1--22, 2022.

\bibitem{li2000gradient}
Y.~Li and M.~Vogelius.
\newblock Gradient estimates for solutions to divergence form elliptic
  equations with discontinuous coefficients.
\newblock {\em Arch. Ration. Mech. Anal.}, 153(2):91--151,
  2000.

\bibitem{lim2009blow}
M.~Lim and K.~Yun.
\newblock Blow-up of electric fields between closely spaced spherical perfect
  conductors.
\newblock {\em Comm. Partial Differential Equations},
  34(10):1287--1315, 2009.

  \bibitem{Min}
M.~Minnaert.
\newblock On musical air-bubbles and the sounds of running water.
\newblock {\em London, Edinburgh, Dublin Philos. Mag. and J. Sci.}, 16:235--248, 1933.

\bibitem{weinkove2022insulated}
B.~Weinkove.
\newblock The insulated conductivity problem, effective gradient estimates and
  the maximum principle.
\newblock {\em Math. Ann.}, pages 1--16, 2022.

\bibitem{yun2007estimates}
K.~Yun.
\newblock Estimates for electric fields blown up between closely adjacent
  conductors with arbitrary shape.
\newblock {\em SIAM J. Appl. Math.}, 67(3):714--730, 2007.

\bibitem{yun2009optimal}
K.~Yun.
\newblock Optimal bound on high stresses occurring between stiff fibers with
  arbitrary shaped cross-sections.
\newblock {\em J. Math. Anal. Appl.},
  350(1):306--312, 2009.

\end{thebibliography}

\end{document}